\definecolor{qqqqff}{rgb}{0,0,1}
\definecolor{xdxdff}{rgb}{0.49,0.49,1}
\definecolor{uuuuuu}{rgb}{0.27,0.27,0.27}
\definecolor{cqcqcq}{rgb}{0.75,0.75,0.75}
\newtheorem{theorem}{Theorem}[section]
\newtheorem{main theorem}{Main Theorem}
\newtheorem{proposition}[theorem]{Proposition}
\newtheorem{corollary}[theorem]{Corollary}
\newtheorem{lemma}[theorem]{Lemma}
\newtheorem{theorem*}{Theorem}
\newtheorem{corollary*}[theorem*]{Corollary}
\newtheorem{conjecture*}[theorem*]{Conjecture}
\theoremstyle{definition}
\newtheorem{example}[theorem]{Example}
\newtheorem{definition}[theorem]{Definition}
\newtheorem{definition-lemma}[theorem]{Definition-Lemma}
\newtheorem{remark}[theorem]{Remark}
\newcommand\R{\mathbb{R}}
\newcommand\Q{\mathbb{Q}}
\newcommand\Z{\mathbb{Z}}
\renewcommand\P{\mathbb{P}}
\newcommand\eps{\varepsilon}
\renewcommand\epsilon{\varepsilon}
\newcommand{\mc}{\mathcal}
\newcommand{\ol}[1]{\overline{#1}}
\DeclareMathOperator{\N^1}{N^1}
\DeclareMathOperator{\ord}{ord}
\DeclareMathOperator{\mult}{mult}
\DeclareMathOperator{\Int}{Int}
\DeclareMathOperator{\Supp}{Supp}
\DeclareMathOperator{\vol}{vol}
\DeclareMathOperator{\val}{val}
\DeclareMathOperator{\Nef}{Nef}
\DeclareMathOperator{\Eff}{Eff}
\DeclareMathOperator{\Bigdiv}{Big}
\DeclareMathOperator{\Null}{Null}
\DeclareMathOperator{\SC}{SC}   %stability B_+ chamber
\newcommand{\bm}{\mathbf B_-}  %B-minus
\newcommand{\bp}{\mathbf B_+}  %B-plus
\newcommand{\okbd}{\Delta}
\newcommand{\okval}{\Delta^{\val}}
\newcommand{\oklim}{\Delta^{\lim}}
\newcommand{\kappanu}{\kappa_\nu}
\begin{document}

\title{Okounkov bodies and Zariski decompositions on surfaces}

\author{Sung Rak Choi}
\address{Department of Mathematics, Yonsei University, Seoul, Korea}
\email{sungrakc@yonsei.ac.kr}

\author{Jinhyung Park}
\address{School of Mathematics, Korea Institute for Advanced Study, Seoul, Korea}
\email{parkjh13@kias.re.kr}

\author{Joonyeong Won}
\address{Department of Mathematical Sciences, KAIST, Daejeon, Korea}
\curraddr{Center for Geometry and Physics, Institute for Basic Science, Pohang, Korea}
\email{leonwon@ibs.re.kr}

%\subjclass[2010]{}
\date{\today}
\keywords{Okounkov body, pseudoeffective divisor, asymptotic invariants of a divisor, Zariski decomposition}

\begin{abstract}
The purpose of this paper is to investigate the close relation between Okounkov bodies and Zariski decompositions of pseudoeffective divisors on smooth projective surfaces. Firstly, we completely determine the limiting Okounkov bodies on such surfaces, and give applications to Nakayama constants and Seshadri constants. Secondly, we study how the shapes of Okounkov bodies change as we vary the divisors in the big cone.
\end{abstract}

\maketitle

%\tableofcontents

\section{Introduction}

To a big divisor $D$ on a variety, one can associate a convex body $\okbd_{Y_\bullet}(D)$ with respect to an admissible flag $Y_\bullet$ called the \emph{Okounkov body}.
Inspired by the works of Okounkov in \cite{O1}, \cite{O2}, Lazarsfeld-Musta\c{t}\u{a} (\cite{lm-nobody}) and Kaveh-Khovanskii (\cite{KK}) initiated the systematic study of the Okounkov bodies of big divisors.
In \cite{chpw1}, two natural ways to associate convex bodies to a pseudoeffective divisor $D$ with respect to an admissible flag $Y_\bullet$ were introduced. They are called the \emph{limiting Okounkov body} $\oklim_{Y_\bullet}(D)$ and the \emph{valuative Okounkov body} $\okval_{Y_\bullet}(D)$.
It was proved that some of the fundamental properties of divisors are encoded in these convex bodies.
We refer to Section \ref{sec3} for the definitions and basic properties of the Okounkov bodies.

The study on Okounkov bodies follows a simple philosophy that the structure of the Okounkov bodies should tell us the information of the divisors. Thus determining the shapes of the Okounkov bodies is an important task. Even in the surface case, there are still many questions that await to be answered.
%It is still an intriguing problem to find which further properties of divisors are encoded in the Okounkov bodies.

The purpose of this paper is twofold. First, we completely determine the limiting and valuative Okounkov bodies of pseudoeffective divisors with respect to an arbitrary admissible flag on surfaces using the Zariski decompositions. As consequences, we show that the geometric properties of the given divisor and the admissible flag are reflected in the Okounkov bodies. Then, we try to find a chamber decomposition of the big cone such that the shape of the Okounkov bodies associated to the divisors in each chamber is constant.

%\begin{question}\label{Question}
%\hfill
%\begin{enumerate}
%\item For a \emph{pseudoeffective} divisor on $S$, how is its positivity (ampleness, nefness, the growth rate of its global sections, etc) reflected on the structure of the Okounkov body?
%\item Is there a chamber decomposition of the big cone $\Bigdiv(S)$ such that the shape of the Okounkov bodies associated to the divisors in each chamber is constant?
%\end{enumerate}
%\end{question}

Throughout the paper, by a surface $S$, we mean a smooth projective surface defined over an algebraically closed field of characteristic zero.
When $D$ is a big divisor on a surface, \cite[Theorem B]{KLM} completely characterize the Okounkov body of $D$. Our first main result is an extension of \cite[Theorem B]{KLM} to the pseudoeffective case.

\begin{theorem}[=Theorem \ref{compoklim} and Theorem \ref{exslope}]\label{maincomp}
Let $D$ be a non-big pseudoeffective divisor on a smooth projective surface $S$, and $D=P+N$ be the Zariski decomposition. Fix an admissible flag $C_\bullet : \{ x \} \subseteq C \subseteq S$. Then
$$
\oklim_{C_\bullet}(D)=\oklim_{C_\bullet}(P) + (\mult_C N, \ord_x ((N-(\mult_C N)C)|_C)),
$$
and $\oklim_{C_\bullet}(P)$ is given as follows:
\begin{enumerate}[\indent$(1)$]
 \item Suppose that $P.C>0$. Then $C$ is a positive volume subvariety of $D$ and $\kappanu(D)=1$. Furthermore, we have
$$
\oklim_{C_\bullet}(P) = \{ (0, x_2) \mid 0 \leq x_2 \leq P.C \}.
$$
Hence, $\dim ( \oklim_{C_\bullet}(D)) = 1$ and $\vol_{\R^1}(\oklim_{C_\bullet}(D)) = \vol_{S|C}^+(D) = P.C$.

 \item Suppose that $P.C=0$. Let $\mu:=\mu(D;C)$ be the Nakayama constant of $D$ along $C$. If $\mu>0$, then $\kappa_{max}(D) \geq 0$, and we can write $P \equiv \mu C + N'$ for some effective divisor $N'$. In this case, we have
$$
\begin{array}{c}
\oklim_{C_\bullet}(P) = \{ ( x_1, x_2) \mid 0 \leq x_1 \leq \mu, x_2 = \frac{\ord_x(N'|_C)}{\mu}x_1 \}
\end{array}
$$
Furthermore, $\dim ( \oklim_{C_\bullet}(D)) = 0$ if $\mu=0$ and $\dim ( \oklim_{C_\bullet}(D)) =1= \kappanu(D)$ if $\mu>0$.
\end{enumerate}

\begin{tikzpicture}[line cap=round,line join=round,>=triangle 45,x=0.7cm,y=0.7cm]
\clip(-4.8,-2.0) rectangle (24.66,4.5);
\draw [->] (0,0) -- (4,0);
\draw (-0.16,-0.18) node[anchor=north west] {$0$};
\draw (0.5,-0.8) node[anchor=north west] {Case $(1)$};
\draw (-0.84,3.74) node[anchor=north west] {$x_2$};
\draw (3.3,-0.2) node[anchor=north west] {$x_1$};
\draw [->] (7,0) -- (7,4);
\draw [->] (7,0) -- (13,0);
\draw (6.7,-0.14) node[anchor=north west] {$0$};
\draw (8.5,-0.8) node[anchor=north west] {Case $(2)$};
\draw (6.12,3.72) node[anchor=north west] {$x_2$};
\draw (12.3,-0.2) node[anchor=north west] {$x_1$};
\draw [->] (0,0) -- (0,4);
\draw [line width=4.4pt] (0,0)-- (0,2);
\draw [line width=2.8pt] (7,0)-- (12,3);
\draw [dotted] (12,3)-- (12,0);
\draw [dotted] (12,3)-- (7,3);
\draw (0.5,1.56) node[anchor=north west] {$\oklim_{C_{\bullet}}(P)$};
\draw (-4.3,2.8) node[anchor=north west] {$P.C=\mathrm{vol}^+_{S|C}(P)$};
\draw (7.62,2.54) node[anchor=north west] {$\oklim_{C_{\bullet}}(P)$};
\draw (12.0,3.5) node[anchor=north west] {$(\mu(D;C),\mathrm{ord}_x(N'|_C))$};
\begin{scriptsize}
\draw [fill=xdxdff] (0,2) circle (1.5pt);
\draw [fill=qqqqff] (12,3) circle (1.5pt);
\end{scriptsize}
\end{tikzpicture}
In particular, if $D$ is a $\Q$-divisor, then the limiting Okounkov body $\oklim_{C_\bullet}(D)$ is a line segment in $\R_{\geq 0}^2$ with a nonnegative rational slope with rational end points. Conversely, for any nonnegative rational number $r \in \Q_{\geq 0}$, there exists a smooth projective surface $S$, a pseudoeffective $\Q$-divisor $D$ on $S$, and an admissible flag $C_{\bullet}$ such that the limiting Okounkov body $\oklim_{C_\bullet}(D)$ has a slope $r$.
\end{theorem}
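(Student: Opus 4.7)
The plan is to reduce to the nef positive part $P$ via a translation formula, and then to determine $\oklim_{C_\bullet}(P)$ via the approximation $\oklim_{C_\bullet}(P) = \lim_{\epsilon \to 0} \okbd_{C_\bullet}(P + \epsilon A)$ for an ample divisor $A$, applying the KLM description \cite{KLM} of the Okounkov body of a big divisor on a surface in terms of the Zariski decomposition of the divisor minus $tC$.

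\textbf{Translation and Case (1).} Write $D + \epsilon A = (P + \epsilon A) + N$ with $N$ effective and fixed. Multiplication by the canonical section of $N$ shifts the flag valuation by $(\mult_C N,\ \ord_x((N - (\mult_C N)C)|_C))$, and this translation passes through the limit $\epsilon \to 0$, reducing the computation to $\oklim_{C_\bullet}(P)$ for $P$ nef and non-big (so $P^2 = 0$). Suppose first $P.C > 0$. Then $\mu(P;C) = 0$: if $P - tC$ were pseudoeffective for some $t > 0$, the intersection $P \cdot (P - tC) = -tP.C$ would be negative, contradicting nefness of $P$. Upper semicontinuity of Nakayama constants gives $\mu(P + \epsilon A; C) \to 0$, so the $x_1$-range of $\okbd_{C_\bullet}(P + \epsilon A)$ collapses to $\{0\}$; at $t = 0$ the slice has length $(P + \epsilon A).C \to P.C$ since $P + \epsilon A$ is ample, giving $\oklim_{C_\bullet}(P) = \{0\} \times [0, P.C]$. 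The auxiliary statements about $\vol_{S|C}^+$ and $\kappanu(D) = 1$ follow at once from this computation together with $P^2 = 0$ and $P.C > 0$ pinning down $\kappanu(P) = 1$.

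\textbf{Case (2).} With $P.C = 0$, set $\mu := \mu(P;C)$; the $x_1$-range of $\okbd_{C_\bullet}(P + \epsilon A)$ converges to $[0, \mu]$, and the body is a point if $\mu = 0$. Suppose $\mu > 0$. Since $\oklim_{C_\bullet}(P)$ has area $\vol(P)/2 = 0$ it is at most one-dimensional, and its $t = 0$ slice has length $\epsilon A.C \to 0$, so the body is a segment emanating from $(0,0)$. To find the other endpoint, take the Zariski decomposition $P - \mu C = Q + M$ with $Q$ nef and $M$ effective. Since $P - \mu C$ lies on the boundary of the pseudoeffective cone, $Q^2 = 0$; maximality of $\mu$ forces $\mu(Q;C) = 0$, since otherwise $P - (\mu + \epsilon)C = (Q - \epsilon C) + M$ would be pseudoeffective; and on a surface a nef divisor with $Q^2 = 0$ admits an effective representative via the fibration structure on the boundary of the nef cone. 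Hence $P \equiv \mu C + N'$ with $N' \equiv Q + M$ effective, and in particular $D \equiv \mu C + N' + N$ shows $\kappa_{max}(D) \geq 0$. Reading the KLM description at $t$ near $\mu_\epsilon$ identifies the right endpoint of $\oklim_{C_\bullet}(P)$ as $(\mu, \ord_x(N'|_C))$, and one-dimensionality forces linearity between the two endpoints.

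\textbf{Rationality and converse.} When $D$ is a $\Q$-divisor, the Zariski decomposition is defined over $\Q$, so $\mu$, $\mult_C N$, and the relevant $\ord_x$ values are rational, giving a segment of rational slope with rational endpoints. For the converse, given $r = p/q \in \Q_{\geq 0}$, construct an explicit example on an iterated blowup of $\P^2$: pick $C$ to be an exceptional divisor and arrange the blow-up configuration so that the Zariski positive part of a chosen $D$ takes the form $qC + N'$ with $N'$ an effective divisor meeting $C$ at a prescribed point $x$ with multiplicity exactly $p$ — for instance by blowing up $q$ collinear points and selecting $N'$ as the strict transform of a curve tangent to $C$ to the appropriate order at $x$. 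The main obstacle throughout is the line-segment assertion in Case (2), specifically the identification $P \equiv \mu C + N'$ with $N'$ effective; this rests on the fine analysis of the variation of Zariski decompositions along the ray $P - tC$ and the structural fact that nef divisors of zero self-intersection on a surface admit effective representatives.
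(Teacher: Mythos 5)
Your reduction to the nef part via translation, and your treatment of Case (1), are both correct; in fact the intersection-theoretic argument that $\mu(P;C)=0$ when $P$ is nef, $P^2=0$, and $P.C>0$ (namely, that $P-tC$ pseudoeffective with $t>0$ would give $P.(P-tC)=-tP.C<0$, contradicting nefness) is a clean and more self-contained substitute for the paper's appeal to Theorem \ref{surfokbd} and Proposition \ref{dimok}.

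The genuine gap is in Case (2). Writing the Zariski decomposition $P-\mu C=P_\mu+N_\mu$ (your $Q+M$), you correctly observe $P_\mu^2=0$ and $\mu(P_\mu;C)=0$, but you then invoke the claim that ``a nef divisor with self-intersection zero on a surface admits an effective representative via the fibration structure on the boundary of the nef cone.'' This claim is false in general: the boundary of the nef cone of a surface need not consist of fiber classes (e.g.\ on a K3 surface of Picard rank two the nef cone can have irrational boundary rays, and nef $\R$-divisors with zero self-intersection need not be numerically equivalent to effective divisors). More importantly, the claim, even if true, is insufficient: the theorem's Okounkov body formula uses $\ord_x(N'|_C)$, and if $P_\mu\neq 0$ this quantity would depend on an arbitrary choice of effective representative of $P_\mu$; moreover Theorem \ref{surfokbd} gives the slice at $t=\mu$ as an interval of length $C.P_\mu$, so you would also need $C.P_\mu=0$ to get a segment at all (which you do not address). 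What is actually needed is $P_\mu=0$, i.e.\ $P=\mu C+N_\mu$ exactly, so that $N'=N_\mu$ is the canonical effective divisor and the endpoint is $(\mu,\ord_x(N_\mu|_C))$. The paper proves $P_\mu=0$ as follows: from $P^2=P.C=0$ one gets $0=P.(P-\mu C)=P.P_\mu+P.N_\mu$ with both summands nonnegative, so $P.P_\mu=0$; the Hodge index theorem applied to the nef classes $P$, $P_\mu$ with $P^2=0$ then forces $P_\mu=kP$ for some $k\geq 0$; and $k=0$ follows from maximality of $\mu(P;C)$. You should replace the false structural claim with this Hodge-index argument. A secondary, lesser issue: your sketch of the converse (Theorem \ref{exslope}) via tangential blowups of $\P^2$ is much vaguer than the paper's explicit construction of a fibration $f:S\to\P^1$ on an iterated blowup of $\P^1\times\P^1$ with a fiber $F=pC_1+qC_2+E$ produced via the Euclidean algorithm on $p,q$; as written your construction does not clearly produce a non-big pseudoeffective divisor whose positive part realizes the Case (2) picture.
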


All the necessary notions are recalled in Sections \ref{sec-prem}, \ref{sec3}, and \ref{sec4}.
To prove Theorem \ref{maincomp}, we study basic properties of Nakayama constants, Zariski decompositions, and asymptotic base loci in Section \ref{sec4}. Note that all the cases in Theorem \ref{maincomp} do occur (see Example \ref{oklimex}).
As applications, we determine the infinitesimal limiting Okounkov body (Corollary \ref{infoklimdim}), and compute the Seshadri constant via the Okounkov body (Theorem \ref{sesh}).

We also study an analogous statement to Theorem \ref{maincomp} for the valuative Okounkov body $\okval_{C_\bullet}(D)$ of an effective divisor $D$ with respect to any admissible flag $C_\bullet$ on a surface $S$ (see Theorem \ref{compokval}).

Next, we study how the shapes of Okounkov bodies change as we vary the divisors. The following is the second main result of this paper.

\begin{theorem}[=Theorem \ref{thrm-main3}]\label{mainchamber}
Let $S$ be a smooth projective surface such that $\ol\Eff(X)$ is rational polyhedral, and fix an admissible flag $C_\bullet : \{ x \} \subseteq C \subseteq S$ where $C$ is a general member of the linear system of a very ample divisor on $S$ and $x$ is a general point in $C$.
Then the limiting Okounkov bodies $\oklim_{C_\bullet}(D_i)$ for all $D_i$ in a given Minkowski chamber $M$ are all similar.
\end{theorem}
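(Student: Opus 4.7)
The plan is to combine the explicit description of the limiting Okounkov body on a surface---provided by Theorem~\ref{maincomp} for non-big pseudoeffective divisors and by \cite[Theorem B]{KLM} for big ones---with the linearity of the Zariski decomposition inside a Minkowski chamber.

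First I would invoke the standard fact that inside a Minkowski chamber $M$ the assignment $D \mapsto (P(D), N(D))$ is the restriction of a linear map, which rests on $\ol{\Eff}(S)$ being rational polyhedral and on the chamber decomposition machinery recalled earlier in the paper. Consequently the basic numerical invariants $D \mapsto P(D) \cdot C$, $D \mapsto \mult_{E_i} N(D)$ for each negative curve $E_i$ in the fixed support of the negative part on $M$, and the Nakayama constant $D \mapsto \mu(D; C)$ are all linear functionals on $M$.

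Next I would exploit the genericity of the flag to eliminate divisorial contributions coming from $C$ itself. Since $\ol{\Eff}(S)$ is rational polyhedral its extremal rays are spanned by finitely many negative curves $E_1, \ldots, E_r$. A general member $C$ of a very ample linear system has $C^2 > 0$, hence does not coincide with any $E_i$; this forces $\mult_C N(D) = 0$ for every $D$ in the big cone. Similarly, for a general $x \in C$ each $\ord_x(E_i|_C)$ equals the local intersection number $(E_i \cdot C)_x$, which is independent of $D \in M$.

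With these inputs, the boundary functions $\alpha_D(t)$ and $\beta_D(t)$ describing $\oklim_{C_\bullet}(D)$ in the KLM / Theorem~\ref{maincomp} formulas become piecewise linear in $t$ whose slopes, break-points, and values at break-points are all linear in $D \in M$. A direct analysis of this linear dependence shows that the polygons $\oklim_{C_\bullet}(D)$ for $D \in M$ are obtained from a single model polygon depending only on $M$ by a similarity transformation whose scaling factor is a linear functional of $D$.

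The principal obstacle is verifying that as $t$ traverses $[0, \mu(D;C)]$ the path $D - tC$ meets the same ordered sequence of Minkowski sub-chambers regardless of the initial $D \in M$, with wall-crossings occurring at values of $t$ proportional to $\mu(D;C)$. This is exactly where the genericity hypotheses on $C$ and $x$ are needed: they ensure that the family $\{D - tC : D \in M,\ t \geq 0\}$ is uniformly transverse to the chamber decomposition, so that the scaling argument above produces a bona fide similarity rather than merely a combinatorial equivalence.
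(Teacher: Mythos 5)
Your proposal misidentifies the notion of ``similar'' being asserted, and in doing so aims at a statement that is in fact too strong. Recall Definition~\ref{def-same okbd}: two polytopes are \emph{similar} when they have the same number of vertices and edges, labeled so that corresponding edges point in parallel directions. This is normal equivalence (same normal fan), \emph{not} Euclidean similarity. Your claim that the polygons are ``obtained from a single model polygon by a similarity transformation whose scaling factor is a linear functional of $D$'' and that the argument produces ``a bona fide similarity rather than merely a combinatorial equivalence'' is exactly backwards: the theorem \emph{is} the combinatorial-equivalence statement, and the stronger single-scaling version is false. Inside a Minkowski chamber one has the Minkowski decomposition $D = \sum b_i B_i$ with $b_i>0$, and $\oklim_{C_\bullet}(D) = \sum b_i\,\oklim_{C_\bullet}(B_i)$, where the summands are indecomposable (segments or triangles). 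As the $b_i$ vary independently, the different edges of the resulting polygon stretch by different factors; the shape is preserved only up to the weaker $\approx$-equivalence of Definition~\ref{def-same okbd}. The paper's proof is essentially the two-line deduction from Lemma~\ref{minkowski} (all $b_i>0$ in a Minkowski chamber) and Lemma~\ref{lem-minko sum} (Minkowski sums of a fixed family of indecomposable rational polytopes with arbitrary positive coefficients are $\approx$-equivalent).

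A secondary issue: Minkowski chambers are subcones of $\Nef(S)$, so every $D$ in the chamber is nef and $N(D)=0$ identically. Your machinery about the linearity of $D\mapsto (P(D),N(D))$ and the computation $\mult_{E_i}N(D)$ is therefore vacuous on the chamber; the genericity of $C$ (forcing $\mult_C N=0$, which you correctly observe) is also unnecessary once one knows $D$ is nef. Your larger strategy of tracking the KLM boundary functions $\alpha_D(t)$, $\beta_D(t)$ as $D$ varies in the chamber is workable and would, if carried out carefully, recover the $\approx$-equivalence, but it is considerably heavier than the paper's route and the conclusion must be downgraded from Euclidean similarity to normal equivalence for the argument to close.
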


See Section \ref{sec-prem} for the brief review on the decomposition of $\Bigdiv(S)$ into the stability chambers $\SC$ and Section \ref{sec5} for basic definitions of convex geometry.
%We remark that one cannot say that for all admissible flag, the Okounkov bodies of finitely many big divisors in a fixed stability chambers have the same shape (see Example \ref{chambex}).
%For two convex polytopes $\okbd_1$,$\okbd_2$ in $\R^2$, we define $\okbd_1\approx\okbd_2$ (or say they have the same \emph{shape}) if $\okbd_1$,$\okbd_2$ have the same number of sides, the corresponding sides are parallel and the corresponding angles coincide.

The paper is organized as follows.
We start in Section \ref{sec-prem} by recalling basic notions and properties of divisors.
Section \ref{sec3} reviews the construction of the Okounkov body as in \cite{lm-nobody} and \cite{KK}, and presents the main results of \cite{chpw1} on the limiting and valuative Okounkov bodies.
In Section \ref{sec4}, we show Theorem \ref{maincomp} and give some applications to Nakayama constants and Seshadri constants.
Section \ref{sec5} is devoted to proving Theorem \ref{mainchamber}.

\subsection*{Aknowledgement}
We would like to thank the referee for helpful suggestions and comments.

\section{Preliminaries}\label{sec-prem}

Throughout the paper, $S$ denotes a smooth projective surface defined over an algebraically closed field of characteristic zero and $D$ denotes a pseudoeffective $\R$-divisor on $S$ unless otherwise stated. In this section, we briefly recall basic notions and properties which we need later on.

\subsection{Asymptotic base locus}

When $D$ is a $\Q$-divisor, we define the \emph{stable base locus} of $D$ as
$$
\text{SB}(D):= \bigcap_m \text{Bs}(mD)
$$
where the intersection is taken over all positive integers $m$ such that $mD$ are $\Z$-divisors, and $\text{Bs}(mD)$ denotes the base locus of the linear system $|mD|$.
The \emph{augmented base locus} of $D$ is defined as
$$
\bp(D):=\bigcap_A\text{SB}(D-A)
$$
where the intersection is taken over all ample divisors $A$ such that $D-A$ are $\Q$-divisors.
The \emph{restricted base locus} of $D$ is defined as
$$
\bm(D):=\bigcup_{A}\text{SB}(D+A)
$$
where the union is taken over all ample divisors $A$ such that $D+A$ are $\Q$-divisors.
We have $\bm(D) \subseteq \text{SB}(D) \subseteq \bp(D)$ for a $\Q$-divisor $D$.
One can check that a divisor $D$ is ample (or nef) if and only if $\bp(D)=\emptyset$
(respectively, $\bm(D)=\emptyset$). %Furthermore, if $D$ is not big, then $\bp(D)=X$.
The asymptotic base loci $\bp(D)$ and $\bm(D)$ depend only on the numerical class of $D$.
For more details, see \cite{elmnp-asymptotic inv of base}, \cite{elmnp-restricted vol and base loci}.

\subsection{Volume of a divisor}

When $D$ is a $\Q$-divisor, the \emph{volume} of $D$ is defined as
$$
\vol_S(D):= \limsup_{m \to \infty} \frac{h^0(S, \mathcal{O}_S(mD))}{m^2/2!}.
$$
The volume $\vol_S(D)$ depends only on the numerical class of $D$.
Furthermore, this function uniquely extends to a continuous function
$$
\vol_S : \Bigdiv(S) \to \R.
$$
Note that if $D$ is not big (i.e., $S=\bp(D)$), then $\vol_S(D)=0$ . For more details, see \cite{pos}.

Let $V$ be a proper subvariety of $S$ such that $V\not\subseteq\bp(D)$.
If $\dim V=1$, then the \emph{restricted volume} of $D$ along $V$ is defined as
$$
\vol_{S|V}(D):=\limsup_{m \to \infty} \frac{h^0(S|V,mD)}{m}
$$
where $h^0(S|V,mD)$ is the dimension of the image of the natural restriction map $\varphi:H^0(S,\mc O_S(D))\to H^0(V,\mc O_V(D))$ (\cite[Definition 2.1]{elmnp-restricted vol and base loci}).
If $\dim V=0$, then we simply let $\vol_{S|V}(D)=1$.
The restricted volume $\vol_{X|V}(D)$ depends only on the numerical class of $D$.
Furthermore, this function uniquely extends to a continuous function
$$
\vol_{S|V} : \text{Big}^V (S) \to \R
$$
where $\text{Big}^V(S)$ is the set of all $\R$-divisor classes $\xi$ such that $V$ is not properly contained in any irreducible component of $\bp(\xi)$.
By \cite[Theorem 5.2]{elmnp-restricted vol and base loci}, if $V$ is an irreducible component of $\bp(D)$, then $\vol_{S|V}(D)=0$. For more details, see \cite{elmnp-restricted vol and base loci}.

Now let $V\subseteq S$ be a subvariety such that $V\not\subseteq\bm(D)$.
For an ample divisor $A$ on $S$, we define the \emph{augmented restricted volume} of $D$ along $V$ as
$$
\vol_{S|V}^+(D):=\lim_{\eps\to 0+} \vol_{S|V}(D+\eps A).
$$
The definition is independent of the choice of $A$.
As with $\vol_S$ and $\vol_{S|V}$, one can check that the augmented restricted volume $\vol_{S|V}^+(D)$ depends only on the numerical class of $D$.
By the continuity of the function $\vol_{S|V}$, we see that
$\vol_{S|V}^+(D)$ coincides with $\vol_{S|V}(D)$ for $D\in\Bigdiv^V(S)$.
For $D\in\Bigdiv^V(S)$, the following inequalities hold by definition:
$$\vol_{S|V}(D) \leq \vol_{S|V}^+(D) \leq \vol_{V}(D|_V).$$
See \cite{chpw1} for more properties of $\vol^+_{S|V}$.

\subsection{Iitaka dimension}

Let $\mathbb N(D)=\{m\in\Z_{>0}|\; |\lfloor mD\rfloor|\neq\emptyset\}$.
For $m\in\mathbb N(D)$, let $\Phi_{mD}:S\dashrightarrow \mathbb P^{\dim|\lfloor mD\rfloor|}$ be the rational map defined by the linear system $|\lfloor mD\rfloor |$.
We define the \emph{Iitaka dimension} of $D$ as the following value
$$
\kappa(D):=\left\{
\begin{array}{ll}
\max\{\dim\text{Im}(\Phi_{mD}) \mid \;m\in\mathbb N(D)\}& \text{if }\mathbb N(D)\neq\emptyset\\
-\infty&\text{if }\mathbb N(D)=\emptyset.
\end{array}
\right.
$$
Note that the Iitaka dimension $\kappa(D)$ depends on the linear equivalence class of $[D] \in \text{Pic}(S) \otimes \R$.
We also define the \emph{maximal Iitaka dimension} of $D$ as follows:
$$
\kappa_{max}(D):=\max \{\kappa(D') \mid D \equiv D' \}.
$$
By definition, $\kappa_{max}(D)$ depends only on the numerical class $[D]\in\N^1(X)_{\R}$.

Fix a sufficiently ample $\Z$-divisor $A$ on $S$. We define the \emph{numerical Iitaka dimension} of $D$ as the nonnegative integer
$$
\kappanu(D):= \max\left\{k \in \Z_{\geq0} \left|\; \limsup_{m \to \infty} \frac{h^0(S,\mathcal{O}_S(\lfloor mD \rfloor + A))}{m^k}>0 \right.\right\}
$$
if $h^0(S,\mathcal{O}_S(\lfloor mD \rfloor + A))\neq\emptyset$ for infinitely many $m>0$ and we let $\kappanu(D):=-\infty$ otherwise.
Remark that our $\kappanu$ is denoted by $\kappa_{\sigma}$ in \cite{lehmann-nu} and \cite{nakayama}.
The numerical Iitaka dimension $\kappanu(D)$ depends only on the numerical class $[D]\in\N^1(X)_{\R}$.
One can easily check that $\kappa(D) \leq \kappanu(D)$ holds and the inequality is strict in general (see \cite[Example 6.1]{lehmann-nu}).
However, if $\kappanu(D)=\dim X$, then $\kappa(D)=\dim X$.
See \cite{lehmann-nu} and \cite{nakayama} for detailed properties of $\kappa$ and $\kappanu$.

\subsection{Zariski decomposition}

Let $D$ be a pseudoeffective divisor on $S$. It is well known that $D$ admits the unique \emph{Zariski decomposition}: $D=P+N$, where the \emph{positive part} $P$ is nef, the \emph{negative part} $N=\sum a_iN_i$ is effective, $P.N_i=0$ for each irreducible component $N_i$, and the intersection matrix $(N_i.N_j)$ is negative definite if $N \neq 0$.
Note that $P$ is maximal in the sense that if $L$ is a nef divisor with $L \leq D$, then $L \leq P$.
If $D$ is a $\Q$-divisor, then so are the positive part $P$ and the negative part $N$.
The following is also well known (cf. \cite[Example 1.11]{elmnp-asymptotic inv of base}).

\begin{lemma}
Let $D=P+N$ be the Zariski decomposition of a pseudoeffective divisor $D$ on a surface $S$. Then $\bm(D)=\Supp(N)$. If we assume that $D$ is big, then
$\bp(D)=\Null(P)$.
\end{lemma}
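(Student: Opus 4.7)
The plan is to treat the two assertions separately, each by combining the defining properties of the Zariski decomposition with the standard behaviour of asymptotic base loci under adding or subtracting ample classes.

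For $\bm(D) = \Supp(N)$, the inclusion $\bm(D) \subseteq \Supp(N)$ follows quickly from the definition. For any ample divisor $A$, I write $D+A = (P+A) + N$; since $P$ is nef, $P+A$ is ample and $\text{SB}(P+A) = \emptyset$, so the standard inclusion $\text{SB}(E_1 + E_2) \subseteq \text{SB}(E_1) \cup \Supp(E_2)$ valid whenever $E_2$ is effective gives $\text{SB}(D+A) \subseteq \Supp(N)$. Taking the union over ample $A$ yields $\bm(D) \subseteq \Supp(N)$. For the reverse inclusion I invoke the well-known characterisation (cf.\ \cite[Proposition V.1.3]{nakayama} and \cite{elmnp-asymptotic inv of base}) that a prime divisor $V$ lies in $\bm(D)$ precisely when the Nakayama asymptotic valuation $\sigma_V(D)$ is positive; on a surface the divisorial part of the Nakayama $\sigma$-decomposition coincides with the Zariski negative part, so for each component $N_i$ of $N=\sum a_i N_i$ one has $\sigma_{N_i}(D) = a_i > 0$, forcing $N_i \subseteq \bm(D)$ and hence $\Supp(N) \subseteq \bm(D)$.

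For $\bp(D) = \Null(P)$ under the bigness hypothesis, note that $P$ is then big and nef, so Nakamaye's theorem yields $\bp(P) = \Null(P)$. To see $\bp(D) \subseteq \Null(P)$, fix $x \notin \Null(P)$: then $x \notin \bp(P)$, so there exists an ample $A$ with $x \notin \text{SB}(P-A)$, and moreover $x \notin \Supp(N)$. Writing $D - A = (P-A) + N$ and using $\text{SB}(D-A) \subseteq \text{SB}(P-A) \cup \Supp(N)$ gives $x \notin \text{SB}(D-A)$, hence $x \notin \bp(D)$. For the reverse inclusion $\Null(P) \subseteq \bp(D)$, components of $N$ already lie in $\bm(D) \subseteq \bp(D)$ by the previous part; for a curve $C \subseteq \Null(P)$ with $C \not\subseteq \Supp(N)$ I verify the restricted-volume identity $\vol_{S|C}(D) = \vol_{S|C}(P) = P \cdot C = 0$ and then conclude $C \subseteq \bp(D)$ via the characterisation of $\bp$ for big classes through vanishing of restricted volumes \cite[Theorem B]{elmnp-restricted vol and base loci}.

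The main obstacle will be the restricted-volume identity $\vol_{S|C}(D) = P \cdot C$ for a curve $C$ not contained in $\Supp(N)$. It is obtained from two inputs: multiplication by a fixed global section cutting out $N$, which is non-zero along $C$, identifies the image of $H^0(S, mP) \to H^0(C, mP|_C)$ with that of $H^0(S, mD) \to H^0(C, mD|_C)$ up to a shift of bounded dimension, reducing the computation to the nef case $D = P$; and for a big nef divisor $P$ and a curve $C$ with $P \cdot C > 0$ the equality $\vol_{S|C}(P) = P \cdot C$ follows from asymptotic Riemann--Roch on the smooth curve $C$ together with a Kodaira-type vanishing on $S$ yielding asymptotic surjectivity of the restriction maps $H^0(S, mP) \to H^0(C, mP|_C)$.
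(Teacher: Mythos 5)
The paper does not actually prove this lemma; it records it as well known, with a pointer to \cite{elmnp-asymptotic inv of base}, so there is no in-paper argument to compare against. Your proof is the standard one and is correct in outline: you get $\bm(D)\subseteq\Supp(N)$ from the elementary inclusion $\text{SB}(E_1+E_2)\subseteq\text{SB}(E_1)\cup\Supp(E_2)$ applied to $D+A=(P+A)+N$ with $P+A$ ample; $\Supp(N)\subseteq\bm(D)$ from the identification of Nakayama's divisorial $\sigma$-function on a surface with the Zariski negative part together with the criterion $V\subseteq\bm(D)\Leftrightarrow\sigma_V(D)>0$; and, for $D$ big, $\bp(D)=\Null(P)$ from Nakamaye's theorem $\bp(P)=\Null(P)$ combined with the restricted-volume characterisation of $\bp$.

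Two small points are worth tightening. First, in the closing paragraph you flag the identity $\vol_{S|C}(D)=\vol_{S|C}(P)=P\cdot C$ as the ``main obstacle'' and then spend its second half on the case $P\cdot C>0$ (asymptotic Riemann--Roch plus a vanishing theorem). But you invoke that identity only for curves $C\subseteq\Null(P)$, where $P\cdot C=0$; there $\vol_{S|C}(P)\leq\vol_C(P|_C)=\deg(P|_C)=0$ is immediate and no Riemann--Roch or vanishing is needed, so the discussion you give does not actually cover the case you use. Second, the reduction $\vol_{S|C}(D)=\vol_{S|C}(P)$ via multiplication by the section cutting out $mN$ is, for $m$ sufficiently divisible, an exact identification of the two images rather than one ``up to a shift of bounded dimension'': the Zariski--Fujita fact that $mN$ sits in the fixed part of $|mD|$ gives $H^0(S,\mc O_S(mD))=H^0(S,\mc O_S(mP))\cdot s_{mN}$, and restricting to $C\not\subseteq\Supp(N)$ is multiplication by the nonzero section $s_{mN}|_C$, which carries one image isomorphically onto the other.
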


Recall that the \emph{null locus} $\Null(P)$ of a nef and big divisor $P$ on a surface $S$ is the union of all irreducible curves $C$ on $S$ with $C.P=0$.

We now briefly recall the $s$-decomposition. For more details, we refer to \cite{P}.
Let $D$ be an effective $\Q$-divisor on $S$. We define
$$
N_s:=\inf\{L \mid L \sim_{\Q} D, L \geq 0\} \text{ and } P_s := D-N_s.
$$
Then we can check that $P_s$ and $N_s$ are $\Q$-divisors.
The expression $D=P_s+N_s$ is the \emph{$s$-decomposition} of $D$.
We note that $P_s$ is the minimal in the sense that if $L$ is an effective divisor with $H^0(S, mL) \simeq H^0(S, mP_s)$ for all sufficiently divisible integers $m>0$, then $P_s \leq L$.

\subsection{Chamber decomposition of the big cone}
We recall the chamber decomposition of the big cone $\Bigdiv(S)$ in the sense of \cite{BKS-Zar chamb}. Using the Zariski decomposition, we can define the following chambers in $\Bigdiv(X)$.

\begin{definition}\label{def-chambers}
Let $D$ be a big divisor on a surface $S$ and $D=P_D+N_D$ its Zariski decomposition.
\begin{enumerate}
\item We define the \emph{Zariski chamber} (associated to a nef divisor $P$) as $$\Sigma_P:=\{D\in\Bigdiv(S)|\;\Supp(N_D)=\Null(P) \}.$$

\item We define the \emph{Stability chamber} (associated to a big divisor $D$) as
$$\SC(D):=\{D'\in\Bigdiv(S)|\;\bp(D)=\bp(D')\}.$$
\end{enumerate}
\end{definition}

By \cite[Theorem 2.2]{BKS-Zar chamb}, whenever $\Int\Sigma_P\cap\Int\SC(D)\neq\emptyset$,
we have $\Int\Sigma_P=\Int\SC(D)$. Thus the chamber decompositions of $\Bigdiv(S)$ into the Zariski chambers and stability chambers only differ in the boundaries of the chambers; the two decompositions are \emph{essentially} the same.

\begin{theorem}[{\cite[Main Theorem]{BKS-Zar chamb}}]\label{thrm-BKS decomposition}
The big cone $\Bigdiv(X)$ has a locally finite decomposition into the Zariski chambers $\Sigma_P$
(or equivalently into the stability chambers $\SC(D)$ by the above remark) that are locally rational polyhedral.
\end{theorem}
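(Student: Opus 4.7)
The plan is to analyze the decomposition locally at an arbitrary big divisor $D_0 \in \Bigdiv(S)$, proving in one sweep that only finitely many chambers meet a neighborhood of $D_0$ and that each such chamber is cut out by rational linear inequalities.

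First I would parametrize a candidate chamber by linear algebra. Given any finite collection $\mathcal{N} = \{C_1, \dots, C_r\}$ of irreducible curves whose Gram matrix $M_\mathcal{N} = (C_i . C_j)$ is negative definite, the relation $P.C_j = 0$ in the Zariski decomposition $D = P + \sum a_i C_i$ forces $M_\mathcal{N}\,\vec{a}(D) = (D.C_1, \dots, D.C_r)^t$. Invertibility of $M_\mathcal{N}$ expresses each $a_i(D)$ as a rational linear functional of $D$ on $N^1(S)_\R$. By uniqueness of the Zariski decomposition, $D$ lies in the closed chamber $\ol{\Sigma}_\mathcal{N} := \{D \in \Bigdiv(S) : \Supp(N_D) \subseteq \mathcal{N}\}$ precisely when $a_i(D) \geq 0$ for all $i$ and the residual class $P(D) := D - \sum a_i(D) C_i$ is nef.

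The heart of the argument is a local finiteness lemma: for any bounded open neighborhood $U$ of $D_0$ in $\Bigdiv(S)$, the set $\mathcal{C}(U) := \{C \text{ irreducible} \mid C \subseteq \Supp(N_D) \text{ for some } D \in U\}$ is finite. Every $C \in \mathcal{C}(U)$ satisfies $C^2 < 0$ by negative definiteness. Fixing an ample class $H$ on $S$, I would bound $C.H$ uniformly by combining $\sum_i a_i(D)\,(C_i.H) = N_D.H \leq D.H$ (with $D.H$ bounded on $U$) with an a priori lower bound on the relevant $a_i(D)$ read off from the entries of $M_\mathcal{N}^{-1}$, the latter controlled via Hodge Index. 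Then irreducible curves with $C.H \leq M$ and $C^2 < 0$ form a finite set: bounded $H$-degree places them in a bounded family, and negative self-intersection makes them rigid (the normal sheaf $\mathcal{O}_C(C)$ has negative degree, so $[C]$ is isolated in the Hilbert scheme).

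Given local finiteness, only finitely many subsets $\mathcal{N} \subseteq \mathcal{C}(U)$ yield negative-definite configurations, and for each such $\mathcal{N}$ the piece $\ol{\Sigma}_\mathcal{N} \cap U$ is cut out by the inequalities $a_i(D) \geq 0$ together with finitely many nefness inequalities $P(D).C' \geq 0$ indexed by $C' \in \mathcal{C}(U')$ for a slightly larger neighborhood $U' \supset U$. This exhibits the decomposition near $D_0$ as a finite union of relatively open subsets of rational polyhedral cones; the equivalence with the stability chamber decomposition on interiors was already recorded in the excerpt. The main technical obstacle is the uniform $H$-degree bound in the finiteness lemma: one must control the inverse Gram matrix $M_\mathcal{N}^{-1}$ independently of the configuration $\mathcal{N}$, and this uniform Hodge-Index estimate is the delicate core of the Bauer--K\"uronya--Szemberg argument.
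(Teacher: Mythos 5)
First, a contextual note: the paper itself does not prove Theorem~\ref{thrm-BKS decomposition}; it is quoted as the Main Theorem of \cite{BKS-Zar chamb}, so the comparison is between your reconstruction and the published Bauer--K\"uronya--Szemberg argument. Your skeleton is the right one: fix a candidate negative configuration $\mathcal{N}=\{C_1,\dots,C_r\}$ with negative definite Gram matrix, invert $M_{\mathcal{N}}$ to express the coefficients $a_i(D)$ as rational linear forms in $D$, and carve out the closed chamber by $a_i(D)\geq 0$ together with nefness of the residual. The reduction to a local finiteness lemma is also exactly where the weight of the theorem sits.

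However, the route you propose for local finiteness does not work. You want to bound $C.H$ for every irreducible curve $C$ occurring in $\Neg(D)$, $D\in U$, by combining $N_D.H\leq D.H$ with ``an a priori lower bound on the relevant $a_i(D)$.'' No such lower bound exists: as $D$ approaches a wall inside $U$, the multiplicity of some $C_i$ in $N_D$ tends continuously to $0$ and vanishes on the wall, so the quantity you hope to bound below is genuinely unbounded below on $U$. This is not a delicate ``uniform Hodge-index estimate'' that \cite{BKS-Zar chamb} secretly supply; it is simply false, and they do not attempt it. (Controlling $M_{\mathcal{N}}^{-1}$ uniformly over all configurations $\mathcal{N}$ is moreover circular: finiteness of the admissible $\mathcal{N}$ is what you are trying to prove.) What \cite{BKS-Zar chamb} actually prove is an upper-semicontinuity of the null locus. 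The positive part $D\mapsto P_D$ is continuous on $\Bigdiv(S)$. Since $P_{D_0}$ is big and nef, write $P_{D_0}\equiv \epsilon A + E$ with $A$ ample, $\epsilon>0$, and $E$ effective. Every irreducible curve $C$ with $C^2<0$ that is \emph{not} a component of $E$ then has $P_{D_0}.C\geq \epsilon\,A.C>0$. For $D'$ close to $D_0$, $\|P_{D'}-P_{D_0}\|$ is small, so $|(P_{D'}-P_{D_0}).C|\leq \tau\,A.C$ with $\tau<\epsilon$ (by openness of ampleness applied to $\tau A\pm(P_{D'}-P_{D_0})$), hence $P_{D'}.C>0$. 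Thus $\Null(P_{D'})\subseteq\Supp(E)$, a fixed finite set, and since $\Neg(D')\subseteq\Null(P_{D'})$, local finiteness follows. Note that no inverse Gram matrix is bounded at any point; once finiteness of the possible $\mathcal{N}$ is established, the finitely many matrices $M_{\mathcal{N}}^{-1}$ give the rational linear forms you want, and the nefness of the residual is likewise tested against only the finitely many curves in $\Supp(E)$, again by the same semicontinuity. If you replace your multiplicity lower bound by this semicontinuity of $\Null$, the rest of your outline goes through.
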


\begin{remark}
Let $D$ be a big divisor on a surface $S$ and $D=P_D+N_D$ its Zariski decomposition.
\begin{enumerate}
\item We note that all the stability chambers $\SC(D)$ intersect with the nef cone $\Nef(X)$ since $\bp(D)=\Null(P_D)=\bp(P_D)$.
    However, a Zariski chamber $\Sigma_P$ can be disjoint from the nef cone $\Nef(X)$ as can be checked in the example of  \cite[Example 3.5]{BKS-Zar chamb}.

\item We will see that the structure of the Okounkov body of $D$ descends to that of the positive part $P_D$. Thus by (1), to study the structure of the Okounkov bodies of the divisors in some stability chamber $\SC(D)$, it is actually enough to study the divisors in $\SC(D)\cap\Nef(S)$. We will clarify this in Section \ref{sec5}.
\end{enumerate}
\end{remark}

\section{Construction and basic properties of Okounkov bodies}\label{sec3}

In this section, we briefly recall the construction of the Okounkov bodies of big divisors in \cite{lm-nobody} and \cite{KK}, and review the main results of \cite{chpw1}. For simplicity, we only consider the surface case.
As before, let $S$ be a smooth projective surface defined over an algebraically closed field of characteristic zero.
We fix an \emph{admissible flag} on $S$
$$
C_{\bullet} : \{x \} \subseteq C \subseteq S
$$
where $C$ is an integral curve and $x$ is a smooth point of $C$.
For an effective Cartier divisor $D$ on $S$ and a section $s \in H^0(S,\mathcal{O}_S(D)) \setminus \{0\}$, we define the function
$$\nu(s) = \nu_{C_{\bullet}}(s):=(\nu_1(s), \nu_2(s)) \in \Z_{\geq 0}^2$$
as follows.
 First, let $\nu_1(s):=\ord_C (s)$. Using a local equation $f$ for $C$ in $S$, we define a section
$s'_1=s \otimes f^{-\nu_1(s)} \in H^0(S, \mathcal{O}_S(D-\nu_1(s)C))$. Since $s'_1$ does not vanish identically along $C$, its restriction $s'_1|_C$ defines a nonzero section $s_1:=s'_1|_C \in H^0(C, \mathcal{O}_C(D-\nu_1(s) c))$. Now take $\nu_2(s):=\ord_x(s_1)$.
Note that $\nu_2(s)$ does not depend on the choice of the local equation $f$.

\subsection{Okounkov bodies of big divisors}

Now assume that $D$ is a big divisor on $S$.
The \emph{Okounkov body} $\okbd_{C_\bullet}(D)$ of $D$ with respect to the admissible flag $C_\bullet$ is defined as the closure of the convex hull of $\nu_{C_\bullet}(|D|_{\R})$ in $\R^2_{\geq 0}$ where we set $|D|_{\R}:=\{ D' \mid D \sim_{\R} D' \geq 0 \}$.

\begin{theorem}[{\cite[Theorem A]{lm-nobody}}]
We have $\vol_{\R^2}(\okbd_{C_\bullet}(D)) = \frac{1}{2}\vol_S(D)$.
\end{theorem}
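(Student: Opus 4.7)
The plan is to follow the Lazarsfeld-Musta\c{t}\u{a} argument, reducing the volume identity to a statement in convex geometry about semigroups in $\Z^3$.

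First, I would introduce the graded semigroup
$$
\Gamma = \Gamma_{C_\bullet}(D) := \bigl\{ (\nu_{C_\bullet}(s), m) \in \Z_{\geq 0}^2 \times \Z_{\geq 0} \;\bigm|\; m \geq 1,\; s \in H^0(S, \mathcal{O}_S(mD))\setminus\{0\} \bigr\},
$$
and set $\Gamma_m := \{\alpha \in \Z_{\geq 0}^2 : (\alpha, m) \in \Gamma\}$. The first key observation is that the map $\nu_{C_\bullet}$ is a rank-$2$ valuation on the function field of $S$: for any nonzero sections $s,t$ of $\mathcal{O}_S(mD)$, $\nu(s+t) \geq \min\{\nu(s),\nu(t)\}$ with equality whenever $\nu(s) \neq \nu(t)$, and $\nu(s)$ takes distinct values on linearly independent sections modulo rescaling. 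From this I would deduce the fundamental equality
$$
\#\Gamma_m = h^0\bigl(S, \mathcal{O}_S(mD)\bigr).
$$

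Next I would verify the two ``Okounkov semigroup'' hypotheses needed to apply the convex-geometric asymptotic lattice-point count of Khovanskii-Kaveh (or equivalently the semigroup lemma of Lazarsfeld-Musta\c{t}\u{a}): namely that $\Gamma$ generates $\Z^3$ as a group (which follows from bigness of $D$ together with choosing $m$ large enough so that a section vanishes to prescribed low orders along $C$ and at $x$), and that $\Gamma$ is contained in the cone over a bounded set in the slice $m=1$ (obvious from the definition since $\nu_1(s)/m$ and $\nu_2(s)/m$ are bounded uniformly). Under these hypotheses, the projected slices $\tfrac{1}{m}\Gamma_m$ become equidistributed in the closed convex hull $\Delta = \overline{\mathrm{conv}\bigcup_m \tfrac1m \Gamma_m}$, yielding
$$
\lim_{m \to \infty} \frac{\#\Gamma_m}{m^2} \;=\; \vol_{\R^2}\bigl(\okbd_{C_\bullet}(D)\bigr).
$$

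Finally, I would identify this limit with $\tfrac12 \vol_S(D)$ by recalling that, for a big divisor on a surface, $\vol_S(D) = \lim_{m\to\infty} h^0(S,\mathcal{O}_S(mD))/(m^2/2)$ (the $\limsup$ is a genuine limit in the big case). Combining with $\#\Gamma_m = h^0(mD)$ gives the desired equality $\vol_{\R^2}(\okbd_{C_\bullet}(D)) = \tfrac12 \vol_S(D)$. Extension from integral to $\R$-divisor classes is then immediate from continuity of both sides on $\Bigdiv(S)$.

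I expect the main obstacle to be the convex-geometric equidistribution step: checking carefully that $\Gamma$ meets the axioms under which the Khovanskii-type lattice-point asymptotics hold, in particular producing enough sections realizing near-extremal valuation values to ensure that $\Gamma$ generates the full lattice $\Z^3$. Once this bookkeeping is in place, the rest is formal.
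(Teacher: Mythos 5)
The paper does not prove this statement; it is cited directly as \cite[Theorem A]{lm-nobody}. Your outline is a faithful reproduction of the Lazarsfeld--Musta\c{t}\u{a} argument (semigroup $\Gamma_{C_\bullet}(D)$, the counting identity $\#\Gamma_m = h^0(mD)$ from the valuation-like properties of $\nu_{C_\bullet}$, verification of the Okounkov-semigroup axioms, the Khovanskii-type equidistribution lemma, and the passage to $\R$-divisors by continuity), so it is correct and matches the cited proof.
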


Note that if $D$ is not big, then $\vol_S(D)=0$.

\subsection{Okounkov bodies of pseudoeffective divisors}

When $D$ is only pseudoeffective, there are two natural ways to associate a lower dimensional convex body to $D$, which were introduced in \cite{chpw1}.

\begin{definition}
(1) When $D$ is effective, i.e., $|D|_{\R}\neq \emptyset$, the \emph{valuative Okounkov body} $\okval_{C_\bullet}(D)$ of $D$ with respect to the admissible flag $C_\bullet$ is defined as the closure of the convex hull of $\nu_{C_\bullet}(|D|_{\R})$ in $\R^2_{\geq 0}$. If $D$ is not effective, we define $\okval_{C_\bullet}(D)=\emptyset$.\\
(2) When $D$ is pseudoeffective, the \emph{limiting Okounkov body} $\oklim_{C_\bullet}(D)$ of $D$ with respect to the admissible flag $C_\bullet$ is defined as
$$
\oklim_{C_\bullet}(D):=\lim_{\epsilon \to 0+}\okbd_{C_\bullet}(D+\epsilon A) = \bigcap_{\epsilon >0} \okbd_{C_\bullet}(D+\epsilon A)
$$
where $A$ is an ample divisor on $S$. Note that $\oklim_{C_\bullet}(D)$ is independent of the choice of $A$.
If $D$ is not pseudoeffective, we define $\oklim_{C_\bullet}(D)=\emptyset$.
\end{definition}

By definition, it is easy to check that $\okval_{C_\bullet}(D) \subseteq \oklim_{C_\bullet}(D)$.

\begin{proposition}[{\cite[Proposition 3.3 and Lemma 4.8]{B}}]\label{dimok}
We have the following:
\begin{enumerate}[\indent$(1)$]
 \item $\dim \okval_{C_\bullet}(D) = \kappa(D)$.
 \item $\kappa_{max} (D) \leq \dim \oklim_{C_\bullet}(D) \leq \kappanu(D)$.
\end{enumerate}
\end{proposition}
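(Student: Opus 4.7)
The plan is to prove (1) and (2) separately using the graded-semigroup machinery of Lazarsfeld--Musta\c{t}\u{a} and Kaveh--Khovanskii. For (1), I would first recall the key algebraic property that for the admissible flag $C_\bullet$, the rank-two valuation $\nu_{C_\bullet}$ takes distinct values on linearly independent sections, so that
$$
\bigl|\nu_{C_\bullet}\bigl(H^0(S,\mathcal{O}_S(mD))\setminus\{0\}\bigr)\bigr|=h^0(S,\mathcal{O}_S(mD))
$$
for each $m\in\mathbb{N}(D)$. I would then organize these values into the graded semigroup $\Gamma(D)\subseteq\mathbb{N}\times\Z_{\geq 0}^2$; by construction, $\okval_{C_\bullet}(D)$ is the height-one slice of the closed convex cone spanned by $\Gamma(D)$. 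By Khovanskii's lattice-point lemma (applied to a sufficiently saturated subsemigroup), the count $|\Gamma(D)_m|$ grows as $\Theta(m^{d})$ with $d=\dim\okval_{C_\bullet}(D)$. On the other hand, standard Iitaka-dimension theory gives $h^0(S,\mathcal{O}_S(mD))=\Theta(m^{\kappa(D)})$ along $\mathbb{N}(D)$. Comparing the two growth rates yields $\dim\okval_{C_\bullet}(D)=\kappa(D)$.

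For the upper bound in (2), I would exploit the defining limit $\oklim_{C_\bullet}(D)=\bigcap_{\eps>0}\okbd_{C_\bullet}(D+\eps A)$ at the level of lattice points. For each positive integer $m$, the inclusion
$$
m\cdot\oklim_{C_\bullet}(D)\;\subseteq\;m\cdot\okbd_{C_\bullet}\bigl(D+\tfrac{1}{m}A\bigr)\;=\;\okbd_{C_\bullet}(mD+A),
$$
combined with the big-case description of \cite{lm-nobody}, bounds the number of lattice points in $m\cdot\oklim_{C_\bullet}(D)$ by $h^0(S,\mathcal{O}_S(\lfloor mD\rfloor+A))$ up to a bounded rounding error. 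Since the latter is $O(m^{\kappanu(D)})$ by the definition of $\kappanu$, while the lattice-point count in $m\cdot\oklim_{C_\bullet}(D)$ grows like $m^{\dim\oklim_{C_\bullet}(D)}$ whenever this dimension is positive, one deduces $\dim\oklim_{C_\bullet}(D)\leq\kappanu(D)$.

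The lower bound $\kappa_{max}(D)\leq\dim\oklim_{C_\bullet}(D)$ in (2) would follow cleanly from (1) together with the numerical invariance of the limiting body. Indeed, for any divisor $D'\equiv D$ one has $\oklim_{C_\bullet}(D')=\oklim_{C_\bullet}(D)$ and $\okval_{C_\bullet}(D')\subseteq\oklim_{C_\bullet}(D')$ (each effective representative arises as the limit of sections of small positive perturbations), so by (1) we obtain $\kappa(D')=\dim\okval_{C_\bullet}(D')\leq\dim\oklim_{C_\bullet}(D)$; taking the supremum over $D'\equiv D$ gives the claim. The main obstacle will be the upper bound in (2): one must upgrade the set-theoretic inclusion of convex bodies to a genuine lattice-point count that actually controls the dimension of $\oklim_{C_\bullet}(D)$, which requires careful handling of the rounding $\lfloor mD\rfloor$ and of the discrepancy between the Okounkov body $\okbd_{C_\bullet}(mD+A)$ and the image $\nu_{C_\bullet}(H^0(S,\mathcal{O}_S(\lfloor mD\rfloor+A)))$, in a regime where $D$ itself is only pseudoeffective and the limiting body may fail to be full-dimensional.
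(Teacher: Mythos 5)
This proposition is not proved in the paper; it is quoted directly from Boucksom \cite{B} (Proposition 3.3 and Lemma 4.8), so there is no in-paper argument to compare against. Assessing your proposal on its own terms: your treatment of (1) via the graded semigroup $\Gamma(D)$ and Khovanskii's lattice-point theorem is the standard route and is sound, modulo the usual care about the rank of the group generated by $\Gamma(D)$, which Boucksom handles by passing to a suitable sub-semigroup. Your derivation of the lower bound $\kappa_{\max}(D)\leq\dim\oklim_{C_\bullet}(D)$ from (1), numerical invariance of $\oklim_{C_\bullet}$, and the inclusion $\okval_{C_\bullet}(D')\subseteq\oklim_{C_\bullet}(D')$ for each $D'\equiv D$ is likewise correct.

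The upper bound $\dim\oklim_{C_\bullet}(D)\leq\kappanu(D)$ is where the argument breaks, and the flaw is more than the bookkeeping you flag. You assert that the inclusion $m\cdot\oklim_{C_\bullet}(D)\subseteq\okbd_{C_\bullet}(mD+A)$ bounds the lattice-point count of the left-hand side by $h^0(S,\mathcal{O}_S(\lfloor mD\rfloor+A))$ up to a bounded rounding error. This inequality runs in the wrong direction. For any big divisor $B$ the valuation $\nu_{C_\bullet}$ is injective on a basis of $H^0(B)$ and its image lies in $\okbd_{C_\bullet}(B)$, so
$$
h^0(S,\mathcal{O}_S(B))=\bigl|\nu_{C_\bullet}\bigl(H^0(S,\mathcal{O}_S(B))\setminus\{0\}\bigr)\bigr|\leq \bigl|\okbd_{C_\bullet}(B)\cap\Z^2\bigr|,
$$
and the discrepancy is unbounded in general: a big divisor $B$ can have $\vol_S(B)$ large (hence $\okbd_{C_\bullet}(B)$ of large area and with many lattice points) while $h^0(B)$ is small. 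Counting all lattice points of $\okbd_{C_\bullet}(mD+A)$ therefore overshoots $h^0(\lfloor mD\rfloor+A)$ rather than bounding it, and the set-theoretic inclusion does not transfer the estimate you need. To obtain $\dim\oklim_{C_\bullet}(D)\leq\kappanu(D)$ one must instead produce, from a positive proportion of the lattice points of $m\cdot\oklim_{C_\bullet}(D)$, actual sections of $\lfloor mD\rfloor+A$; concretely one works with the sub-semigroup $\{(m,\nu_{C_\bullet}(s))\mid s\in H^0(S,\mathcal{O}_S(\lfloor mD\rfloor+A))\setminus\{0\}\}$ rather than with the full lattice inside the convex body, and this is essentially what \cite{B} does. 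As written, your step is not merely incomplete: the inequality you invoke is false, so the argument needs a different ingredient, not just a tighter error term.
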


We now present examples for which the both inequalities in Proposition \ref{dimok} (2) are strict.

\begin{example}\label{ex-1}
We use Mumford's example (\cite[Example 1.5.2]{pos}) to show that $\dim \oklim_{C_\bullet}(D)$ does not coincide with $\kappa_{max}(D)$ in general.

There exists a ruled surface $S=\P(E)$ such that $H:=\mathcal{O}_{\P(E)}(1)$ is nef, $\kappa_{max}(H)=0$, and $\kappanu(H)=1$. Let $F$ be a fiber. Then the nef cone and the pseudoeffective cone of $S$ coincide and it is generated by $H$ and $F$. For any irreducible curve $C$ on $S$, we may write $C \simeq aH+bF$ for some rational numbers $a, b \geq 0$. We fix an admissible flag
$$
C_\bullet : \{ x\} \subseteq C \subseteq S
$$
where $x$ is any smooth point on $C$.
If $b=0$, then $a>0$ and using Theorem \ref{maincomp}, we obtain
$$
\oklim_{C_\bullet}(H) = \left\{ (x_1, 0) \in \R^2 \left|\; 0 \leq x_1 \leq \frac{1}{a} \right.\right\}
$$
so that $\dim \oklim_{C_\bullet}(H)=1$.
If $b>0$, then
$$
\oklim_{C_\bullet}(H) = \{ (0, x_2) \in \R^2 \mid  0 \leq x_2 \leq b \}
$$
so that $\dim \oklim_{C_\bullet} (H)=1$.
\end{example}

\begin{example}\label{infokex}
Here we give an example such that $\dim \oklim_{C_\bullet}(D) < \kappanu(D)$.
Let $S$ be the surface as in Example \ref{ex-1} and $\pi : \widetilde{S} \to S$ be the blow-up at any point $y\in S$ with the exceptional divisor $E$.
We fix an admissible flag
$$
C_\bullet: \{ x \} \subseteq E \subseteq \widetilde{S}
$$
where $x$ is a general point on $E$.
Then using Theorem \ref{maincomp}, we can easily see that $\oklim_{C_\bullet} (\pi^*H)=\{ (0,0) \}$ even though $\kappanu(\pi^*H)=\kappanu(H)=1$.
\end{example}

Actually, $\oklim_{C_\bullet} (\pi^*H)$ is the infinitesimal limiting Okounkov body $\oklim_{\inf}(H)$ which we will define below in Definition \ref{infokdef}. We will show that $\dim \oklim_{\inf}(D) = \max\{ 0, \kappa_{\max}(D) \}$ for the surface case (see Corollary \ref{infoklimdim}).

\begin{definition}\label{infokdef}
Let $\pi : \widetilde{S} \to S$ be the blow-up at a point $y\in S$ with the exceptional divisor $E$. We consider an admissible flag
$$
C_\bullet: \{ x \} \subseteq E \subseteq \widetilde{S}
$$
where $x$ is a point on $E$.
If $y\in S$ and $x\in E$ are chosen very generally, then the Okounkov body $\okbd_{C_\bullet}(\pi^*D)$ is called
the \emph{infinitesimal Okounkov body} of $D$ and we denote it by $\okbd_{\inf}(D)$.
Similarly, if $D$ is a pseudoeffective divisor on $S$, then the Okounkov body $\oklim_{C_\bullet}(\pi^*D)$ for the general choices of $y\in S$ and $x\in E$ is called the
\emph{infinitesimal limiting Okounkov body} and is denoted by $\oklim_{\inf}(D)$.
\end{definition}

If $D$ is a big divisor on $S$ and we choose $y\in S$ and $x\in E$ very generally, then all $\okbd_{C_\bullet}(\pi^*D)$ coincide by
\cite[Proposition E]{lm-nobody}. Similarly, even in the case where $D$ is pseudoeffective, we can also easily check that $\oklim_{C_\bullet}(\pi^*D)$ all coincide. Thus the above definitions of $\okbd_{\inf}(D)$ and $\oklim_{\inf}(D)$ are well-defined.

In \cite{chpw1}, the following two special subvarieties were introduced and studied.

\begin{definition}
Let $D$ be a divisor on a surface $S$.
\begin{enumerate}[\indent$(1)$]
\item For an effective divisor $D$ on $S$, a smooth subvariety $U \subseteq S$  is called a \emph{Nakayama subvariety} of $D$ if $\kappa(D)=\dim U$ and the natural map
$$
H^0(S, \mc O_S(\lfloor mD \rfloor)) \to H^0(U, \mc O_U(\lfloor mD|_U \rfloor))
$$
is injective (or equivalently, $H^0(S, \mc I_U \otimes \mc O_S(\lfloor mD \rfloor))=0$ where $\mc I_U$ is the ideal sheaf of $U$ in $S$) for every integer $m \geq 0$.

\item For a pseudoeffective divisor $D$ on $S$, a subvariety $V\subseteq X$ of dimension $\kappanu(D)$
such that $\vol_{S|V}^+(D)>0$ and $V\not\subseteq\bm(D)$ is called a \emph{positive volume subvariety} of $D$.
\end{enumerate}
\end{definition}

By definition, $U=S$ is the Nakayama subvariety (or positive volume subvariety) of $D$ if and only if $D$ is big.
It is proven that any general subvariety $U \subseteq S$ of dimension $\kappa(D)$ is a Nakayama subvariety of $D$.
Similarly, any general subvariety $V \subseteq S$ of dimension $\kappanu(D)$ is a positive volume subvariety of $D$.

In \cite{chpw1}, the following were shown:
\begin{enumerate}[\indent$(1)$]
\item If an admissible flag  $C_\bullet : \{x\} \subseteq C \subseteq S$ contains a Nakayama subvariety $U$ of $D$ and $x$ is a general point, then $\okval_{C_\bullet}(D) \subseteq \{ 0 \}^{2-\kappa(D)} \times \R_{\geq 0}^{\kappa(D)}$ so that we can regard $\okval_{C_\bullet}(D) \subseteq \R_{\geq 0}^{\kappa(D)}$.
\item  If an admissible flag  $C_\bullet : \{x\} \subseteq C \subseteq S$ contains a positive volume subvariety $V$ of $D$, then $\oklim_{C_\bullet}(D) \subseteq \{ 0 \}^{2-\kappanu(D)} \times \R_{\geq 0}^{\kappanu(D)}$ so that we can regard $\oklim_{C_\bullet}(D) \subseteq \R_{\geq 0}^{\kappanu(D)}$.
\end{enumerate}

\begin{theorem}[{\cite[Theorems A and B]{chpw1}}]\label{chpwthm}
Let $D$ be a pseudoeffective divisor on a surface $S$, and fix an admissible flag $C_\bullet : \{x\} \subseteq C \subseteq S$.
We have the following:
\begin{enumerate}[\indent$(1)$]
 \item Suppose that $D$ is effective, the admissible flag $C_\bullet$ contains a Nakayama subvariety $U$ of $D$, and $x$ is a general point. Then
$$
\dim \okval_{C_\bullet} (D) = \kappa(D) \text{ and } \vol_{\R^{\kappa(D)}} (\okval_{C_\bullet} (D)) = \vol_{S|U}(D).
$$
 \item Suppose that the admissible flag  $C_\bullet : \{x\} \subseteq C \subseteq S$ contains a positive volume subvariety $V$ of $D$. Then
$$
\dim \oklim_{C_\bullet}(D) = \kappanu(D) \text{ and } \vol_{\R^{\kappanu(D)}} (\oklim_{C_\bullet}(D)) = \vol_{S|V}^+(D).
$$
\end{enumerate}
\end{theorem}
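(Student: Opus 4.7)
The plan is to split both statements by the dimension of the distinguished subvariety. When $\dim U=2$ in (1) (or $\dim V=2$ in (2)) the divisor $D$ is big and $U=V=S$, in which case the statement reduces to the Lazarsfeld--Musta\c t\u a theorem recalled at the start of Section \ref{sec3}; the content therefore lies in the cases $\dim U,\dim V\in\{0,1\}$. For Part (1), I first establish the positioning claim $\okval_{C_\bullet}(D)\subseteq\{0\}^{2-\kappa(D)}\times\R_{\geq 0}^{\kappa(D)}$: when $\kappa(D)=1$ so $U=C$, the injectivity of $H^0(S,\mc O_S(\lfloor mD\rfloor))\to H^0(C,\mc O_C(\lfloor mD|_C\rfloor))$ provided by the Nakayama-subvariety hypothesis forces $\ord_C(s)=0$ for every nonzero section $s$ (otherwise $s|_C=0$ and injectivity would kill $s$), hence $\nu_1(s)=0$ for all $s$; the case $\kappa(D)=0$ is analogous after one further restriction. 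I then identify $\okval_{C_\bullet}(D)$ with the Okounkov body of the restricted graded linear series $W_\bullet=\bigl\{\mathrm{Image}\bigl(H^0(S,\mc O_S(\lfloor mD\rfloor))\to H^0(U,\mc O_U(\lfloor mD|_U\rfloor))\bigr)\bigr\}_m$ on $U$ with respect to the truncated flag $\{x\}\subseteq U$, and apply the Kaveh--Khovanskii / Lazarsfeld--Musta\c t\u a theorem for graded linear series \cite{lm-nobody,KK}. The Nakayama hypothesis is precisely what guarantees maximal growth $\dim W_m\asymp m^{\kappa(D)}$, so this produces a $\kappa(D)$-dimensional body of Euclidean volume $\vol_{S|U}(D)$ by definition.

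For Part (2), I work from the defining formula $\oklim_{C_\bullet}(D)=\bigcap_{\eps>0}\okbd_{C_\bullet}(D+\eps A)$ for a fixed ample $A$. Since $V\not\subseteq\bm(D)$ and $\vol^+_{S|V}(D)>0$, for every $\eps>0$ the divisor $D+\eps A$ is big with $V\not\subseteq\bp(D+\eps A)$, and continuity of $\vol_{S|V}$ on $\Bigdiv^V(S)$ gives $\vol_{S|V}(D+\eps A)\to\vol^+_{S|V}(D)$. For each such $\eps$, Lazarsfeld--Musta\c t\u a's slicing formula presents $\okbd_{C_\bullet}(D+\eps A)$ as the region under the graph of $t\mapsto\vol_{S|C}((D+\eps A)-tC)$ on $[0,\mu(D+\eps A;C)]$. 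Since $D$ is non-big in the relevant cases, the total area $\vol_{\R^2}(\okbd_{C_\bullet}(D+\eps A))=\tfrac12\vol_S(D+\eps A)$ tends to $0$, so the nested intersection has vanishing $2$-dimensional Lebesgue measure. On the other hand, the slice at first coordinate $0$ has length $\vol_{S|C}(D+\eps A)\to\vol^+_{S|C}(D)>0$ when $\kappanu(D)=1$ with $V=C$; the limit body is therefore a $1$-dimensional segment along the $x_2$-axis of length $\vol^+_{S|C}(D)$. When $\kappanu(D)=0$ and $V=\{x\}$, an analogous collapse in both coordinates reduces the body to a single point of $\vol_{\R^0}$-mass $1=\vol^+_{S|V}(D)$.

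The main technical obstacle is justifying that the nested intersection $\bigcap_\eps\okbd_{C_\bullet}(D+\eps A)$ faithfully computes the dimension and Euclidean volume of $\oklim_{C_\bullet}(D)$ via limits of the corresponding data for the perturbed bodies: one must verify that these bodies shrink continuously in Hausdorff distance and that the slicing operation commutes with the limit $\eps\to 0^+$. This reduces to continuity and monotonicity of the restricted volume on $\Bigdiv^C(S)$ from \cite{elmnp-restricted vol and base loci}, together with the hypothesis $V\not\subseteq\bm(D)$ that rules out pathological behaviour of $\vol_{S|C}$ on the boundary of the big cone. A parallel subtlety in Part (1) is the quantitative verification that $\dim W_m$ really grows at the maximal rate $m^{\dim U}$ rather than slower; this is precisely what the Nakayama-subvariety hypothesis encodes, and what allows the Kaveh--Khovanskii machinery to apply to $W_\bullet$.
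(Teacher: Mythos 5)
The paper does not prove this statement; it is quoted wholesale from \cite{chpw1} (attributed as Theorems A and B there), so there is no internal proof for me to compare against, and your attempt can only be judged on its own merits. With that caveat, your outline is broadly the right strategy, and several of the reductions (the $\dim U = 2$ case collapsing to Lazarsfeld--Musta\c t\u a, the positioning argument $\nu_1(s)=0$ via injectivity of restriction, the perturbation $D+\eps A$ together with $\bp(D+\eps A)\subseteq\bm(D)$ in Part (2)) are exactly the kind of moves one needs and are sound.

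There are, however, some real gaps. In Part (1), after identifying $\okval_{C_\bullet}(D)$ with the Okounkov body of the restricted graded linear series $W_\bullet$ on $U$, invoking the Kaveh--Khovanskii / Lazarsfeld--Musta\c t\u a volume theorem requires more than the growth rate $\dim W_m\asymp m^{\kappa(D)}$; their theorem for graded linear series needs condition (B)/(C) (``containing an ample series''), or at least a non-degenerate valuation semigroup. The Nakayama-subvariety hypothesis gives you injectivity and hence the correct growth, but you do not verify that $W_\bullet$ satisfies the hypotheses under which the $\vol=\lim\dim W_m/m^{\dim U}$ identity holds; for a flag curve this is manageable but is precisely the point that needs an argument, not a remark. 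In Part (2), the assertion that $\okbd_{C_\bullet}(D+\eps A)$ is ``the region under the graph of $t\mapsto\vol_{S|C}((D+\eps A)-tC)$'' misstates the surface slicing description: by Theorem \ref{surfokbd} the body is the region between the two graphs $\alpha(t)=\ord_x(N_t|_C)$ and $\beta(t)=\alpha(t)+C.P_t$, so the fibers have length $C.P_t$ but the body itself need not sit on the $x_1$-axis (nor need the limiting body lie along the $x_2$-axis if $x\in\Supp(N)$). The equality of the slice length with $\vol_{S|C}$ and the passage $\vol_{S|C}(D+\eps A)\to\vol^+_{S|C}(D)=P.C$ are correct in spirit, but you rely on the unstated fact that $\vol_{S|C}(E)=P_E.C$ for a big $E$ with $C\not\subseteq\bp(E)$, and on a Hausdorff-continuity statement for the nested bodies that you flag as a ``technical obstacle'' but never discharge. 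These are the places where the genuine content of the theorem lives, and the proposal currently asserts them rather than proving them.
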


\section{Limiting Okounkov bodies of pseudoeffective divisors on surfaces}\label{sec4}

The aim of this section is to give an explicit description of the Okounkov bodies of pseudoeffective divisors on a smooth surface. We first review the known properties of the Okounkov bodies of big divisors, and give simple proofs which also work for the limiting Okounkov bodies of pseudoeffective divisors. Next, we prove the main results Theorems \ref{compoklim} and \ref{exslope}, and present some applications.
As before, $S$ denotes a smooth projective surface defined over an algebraically closed field of characteristic zero.

\subsection{Nakayama constant and Zariski decomposition}

For a pseudoeffective divisor $D$ on $S$ and a subvariety $V$ of $S$, we define the \emph{Nakayama constant of $D$ along $V$} as
 $$
\mu(D; V) := \sup \{ s \geq 0 \mid f^*D-sE \text{ is pseudoeffective} \}
$$
where $f : \widetilde{S} \to S$ is the blow-up of $S$ at $V$ with the exceptional divisor $E$.
Note that if $V$ is an integral curve, then we take $f=id$, $\widetilde{S} = S$ and $E=V$.

The Nakayama constant and Zariski decomposition play an important role in studying the Okounkov body as in the following theorem.

\begin{theorem}[{\cite[Theorem 6.4]{lm-nobody} and \cite[Theorem 4.4]{chpw1}}]\label{surfokbd}
Let $D$ be a pseudoeffective divisor on a surface $S$. Fix an admissible flag $C_\bullet : \{ x \} \subseteq C \subseteq S$.
Let $a:=\mult_C N$ where $D=P+N$ is the Zariski decomposition, and $\mu :=\mu(D;C)$. Consider the divisor $D_t:= D-tC$ for $a \leq t \leq \mu$. Denote by $D_t=P_t+N_t$ the Zariski decomposition. Let $\alpha(t):=\ord_x(N_t|_C)$ and $\beta(t):=\alpha(t)+C.P_t$. Then the limiting Okounkov body $\oklim_{C_\bullet}(D)$ of $D$ is given by
$$
\oklim_{C_\bullet}(D)=\{(x_1, x_2) \mid a \leq x_1 \leq \mu \text{ and } \alpha(x_1) \leq x_2 \leq \beta(x_1) \}.
$$
\end{theorem}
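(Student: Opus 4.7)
The strategy is to establish the formula first when $D$ is big, via the Lazarsfeld--Musta\c{t}\u{a} slicing argument, and then to approximate a general pseudoeffective $D$ by big divisors $D+\epsilon A$ and pass to the limit using continuity of the Zariski decomposition on the big cone.

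For the big case, I would compute the vertical slice of $\okbd_{C_\bullet}(D)$ at height $x_1=t$ directly from the definition. By construction, this slice is the closed convex hull of the normalized values $\nu_2(s)/m$ over sections $s\in H^0(S,mD)$ with $\nu_1(s)=\lceil mt\rceil$; dividing by a local equation of $C$ raised to $\lceil mt\rceil$ identifies such $s$ with sections of $mD_t$ that do not vanish identically along $C$, and $\nu_2(s)/m$ becomes the normalized order of vanishing at $x$ of the restriction to $C$. Thus the slice at $t$ is controlled by the linear series $|D_t|$ restricted to $C$.

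I would then read off the slice from the Zariski decomposition $D_t=P_t+N_t$. The negative part $N_t$ forces an asymptotic fixed vanishing of order $\alpha(t)=\ord_x(N_t|_C)$ at $x$ after restriction to $C$ (this makes sense because $C\not\subset\Supp N_t$ on the interior $a<t<\mu$, since $C$ is removed only at the boundary). The positive part $P_t$ is nef and $P_t|_C$ has degree $P_t\cdot C$; since a general section of a high multiple of a nef line bundle on the curve $C$ realizes every vanishing order at $x$ between $0$ and the degree, the contribution to the slice is an interval of length $P_t\cdot C$. Putting these together yields the interval $[\alpha(t),\alpha(t)+P_t\cdot C]=[\alpha(t),\beta(t)]$, and the admissible range $t\in[a,\mu]$ is exactly the interval on which $D_t$ remains pseudoeffective by definition of $a$ and $\mu$.

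For the general pseudoeffective case, fix an ample $A$ and set $D_\epsilon:=D+\epsilon A$, which is big. Applying the big case produces
\[
\okbd_{C_\bullet}(D_\epsilon)=\{(t,y)\mid a_\epsilon\le t\le\mu_\epsilon,\ \alpha_\epsilon(t)\le y\le\beta_\epsilon(t)\},
\]
and by definition $\oklim_{C_\bullet}(D)=\bigcap_{\epsilon>0}\okbd_{C_\bullet}(D_\epsilon)$. One then invokes the continuity of Zariski decomposition on the big cone of a surface (so that $P_{(D_\epsilon)_t}\to P_{D_t}$ and $N_{(D_\epsilon)_t}\to N_{D_t}$ for fixed $t$ interior to $[a,\mu]$), continuity of intersection numbers (so $P_{(D_\epsilon)_t}\cdot C\to P_t\cdot C$), and continuity of the Nakayama constant along $C$ (so $\mu_\epsilon\to\mu$, $a_\epsilon\to a$), to deduce that the pointwise limits of $\alpha_\epsilon,\beta_\epsilon$ on $[a,\mu]$ coincide with $\alpha,\beta$.

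The main obstacle I anticipate is the behavior at the boundary points $t=a$ and $t=\mu$: near $t=a$ the curve $C$ may enter $\Supp(N_t)$, and at $t=\mu$ the positive part $P_t$ lies on the boundary of the nef cone with $P_t\cdot C=0$, so continuity of $\alpha$ and $\beta$ there needs to be argued separately. To handle this cleanly, I would use monotonicity of the family $\okbd_{C_\bullet}(D_\epsilon)$ in $\epsilon$ (the bodies shrink as $\epsilon\to 0^+$) together with convexity of $\alpha_\epsilon$ and concavity of $\beta_\epsilon$ in $t$ to conclude that the decreasing intersection over $\epsilon$ is precisely the closed region cut out by $\alpha$ and $\beta$ on $[a,\mu]$, which is the stated formula.
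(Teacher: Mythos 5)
The paper does not prove this theorem --- it is stated as a recalled result with citations to \cite[Theorem 6.4]{lm-nobody} (the big case) and \cite[Theorem 4.4]{chpw1} (the pseudoeffective extension) --- so there is no in-paper proof to compare against. That said, your plan does reconstruct essentially the argument those references use: Lazarsfeld--Musta\c{t}\u{a}'s slicing of the Okounkov body at $x_1=t$ reads off $\alpha(t)$ and $\beta(t)$ from the Zariski decomposition of $D_t=D-tC$, and the pseudoeffective case is obtained by intersecting over $\okbd_{C_\bullet}(D+\epsilon A)$ and letting $\epsilon\to 0^+$, which is precisely how $\oklim_{C_\bullet}$ is defined in \cite{chpw1}.

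Two points where your sketch is a bit light and the cited proofs do real work. First, the claim that the slice at height $t$ is exactly $[\alpha(t),\beta(t)]$ is not simply ``a general section of a nef bundle on $C$ hits every vanishing order up to the degree''; one must control the image of the restriction map $H^0(S,\lfloor mD_t\rfloor)\to H^0(C, \cdot)$ asymptotically, which is done via restricted volumes, and one must know $C\not\subseteq\Supp N_t$ on $(a,\mu]$ (this is exactly what Lemma~\ref{ptlem}(1), or its analogue \cite[Prop.~2.1]{KLM}, provides). Second, passing to the limit requires more than ``continuity of Zariski decomposition'': you need the monotonicity $N_{(D_{\epsilon'})_t}\le N_{(D_\epsilon)_t}$ for $\epsilon'<\epsilon$ (the Corollary following Lemma~\ref{nakp} in this paper, first in \cite[Prop.~2.1]{KLM}) together with local finiteness of the Zariski chamber decomposition \cite{BKS-Zar chamb} to conclude that $\alpha_\epsilon\to\alpha$ and $\beta_\epsilon\to\beta$ pointwise and that the decreasing intersection is the closed region they bound; your appeal to convexity of $\alpha$, concavity of $\beta$, and monotonicity in $\epsilon$ is the right mechanism for the boundary points $t=a,\mu$, so with these details supplied the plan is sound.
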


Now we show some basic properties of the Nakayama constant and the Zariski decomposition on a surface.

\begin{lemma}\label{nakp}
Let $D$ be a pseudoeffective divisor on a surface $S$, and $D=P+N$ be the Zariski decomposition. For an integral curve $C$, we have $\mu(D;C)=\mu(P;C)+\mult_C N$.
\end{lemma}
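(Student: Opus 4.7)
The plan is to check both inequalities $\mu(D;C) \geq a + \mu(P;C)$ and $\mu(D;C) \leq a + \mu(P;C)$ separately, where I set $a := \mult_C N$ and decompose $N = aC + N'$ so that $N'$ is effective with $C \not\in \Supp(N')$.

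For the lower bound, I would argue that for any $0 \leq t < \mu(P;C)$ the divisor $P - tC$ is pseudoeffective by the definition of $\mu(P;C)$, so that
\[
D - (a+t)C \ = \ P - tC + N'
\]
is a sum of a pseudoeffective and an effective divisor, hence pseudoeffective. Letting $t$ approach $\mu(P;C)$ yields $\mu(D;C) \geq a + \mu(P;C)$.

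For the upper bound, the crucial preliminary observation is that $P + N'$ is already its own Zariski decomposition: $P$ is nef, $N'$ is effective, the prime components of $N'$ form a subset of those of $N$ so their intersection matrix remains negative definite, and the orthogonality $P \cdot N'_j = 0$ is inherited from $P \cdot N_j = 0$. Granting this, suppose $s > a$ with $D - sC$ pseudoeffective and set $t := s - a$, so that $D - sC = P - tC + N' = Q + R$ is the Zariski decomposition of $D - sC$. Rearranging gives
\[
P + N' \ = \ Q + (R + tC),
\]
so $Q$ is a nef divisor with $P + N' - Q = R + tC \geq 0$. The maximality of the positive part in the Zariski decomposition of $P + N'$ then forces $Q \leq P$. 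Consequently $P - Q = R + tC - N'$ is effective, and since $C \not\in \Supp(N')$, comparing multiplicities component by component yields $R \geq N'$, so $R - N' \geq 0$ is effective. Then
\[
P - tC \ = \ Q + (R - N')
\]
exhibits $P - tC$ as a sum of a nef and an effective divisor, hence pseudoeffective; this gives $t \leq \mu(P;C)$ and therefore $\mu(D;C) \leq a + \mu(P;C)$.

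The delicate step will be the observation that $(P, N')$ is the Zariski decomposition of $P + N'$; once this is in hand, the maximality of positive parts immediately produces the inequality $Q \leq P$, and the pseudoeffectivity of $P - tC$ falls out by elementary bookkeeping of multiplicities.
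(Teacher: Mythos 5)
Your proof is correct and takes essentially the same approach as the paper: both hinge on the maximality of the positive part of the Zariski decomposition of $P+N'$ to conclude $Q\leq P$ (the paper's $P_t\leq P$), and then exhibit $P-tC$ as the sum of a nef and an effective divisor. The only cosmetic difference is that the paper reduces at the outset to $a=\mult_C N=0$ by replacing $D$ with $D-aC$, whereas you carry $a$ through and explicitly verify that $(P,N')$ is the Zariski decomposition of $P+N'$ — a fact the paper's reduction uses implicitly.
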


\begin{proof}
By replacing $D$ by $D- \left( \mult_C N \right) C$, we can assume that $C$ is not in the support of $N$. Then we only have to show that $\mu(D;C)=\mu(P;C)$.
Note that $\mu(D;C) \geq \mu(P;C)$. Thus it is sufficient to show that if $D-tC$ is pseudoeffective for some $t \geq 0$, then so is $P-tC$. Let $D-tC=P_t + N_t$ be the Zariski decomposition. Then
$$
P+N=D=P_t + N_t + tC.
$$
By the maximal property of the positive part of the Zariski decomposition, we obtain $N \leq N_t + tC$. Since $C$ is not an irreducible component of $N$, the divisor $N_t-N$ is effective. Then $P-tC = P_t + (N_t-N)$ is pseudoeffective as desired.
\end{proof}

The following was first established in \cite[Proposition 2.1]{KLM} for big divisors.

\begin{corollary}
    Let $D$ be a pseudoeffective divisor on $S$ with the Zariski decomposition $D=P+N$, and $C$ be an integral curve in $S$. Assume that $C$ is not an irreducible component of $N$.
For $t_1 > t_2 \geq 0$, assume that $D-t_1 C$ is pseudoeffective so that we have the Zariski decompositions $D-t_1C = P_{t_1} + N_{t_1}$ and $D-t_2C=P_{t_2} + N_{t_2}$. Then $N_{t_1} \geq N_{t_2}$.
\end{corollary}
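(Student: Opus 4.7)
The plan is to combine the minimality characterization of the negative part of the Zariski decomposition with a rigidity argument that forbids $C$ from lying in $\Supp(N_t)$ for any $t$ in the relevant range. For the minimality step, I would observe that $P_{t_1}$ is nef and
$$
D - t_2 C - \bigl(N_{t_1} + (t_1 - t_2) C\bigr) = D - t_1 C - N_{t_1} = P_{t_1}
$$
is again nef. Since $N_{t_2}$ is the minimal effective divisor whose complement in $D - t_2 C$ is nef, this yields $N_{t_2} \leq N_{t_1} + (t_1 - t_2) C$, which already forces $\mult_E N_{t_2} \leq \mult_E N_{t_1}$ for every prime divisor $E \neq C$. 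Only the $C$-coefficient is not yet under control.

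The main obstacle is to rule out $C$ itself from the support of any $N_t$ with $t \in [0, t_1]$. My plan is a contradiction argument: assume that $C$ appears in $N_{t^*}$ with multiplicity $m > 0$ for some $t^* \in (0, t_1]$, write $N_{t^*} = m C + N_{t^*}'$ with $C \not\subseteq \Supp(N_{t^*}')$, and then slide $t$ from $t^*$ down to $0$ while keeping the positive part fixed. Concretely, for each $s \in [0, t^*]$ the candidate decomposition
$$
D - sC = P_{t^*} + \bigl((m + t^* - s) C + N_{t^*}'\bigr)
$$
satisfies every axiom of the Zariski decomposition: $P_{t^*}$ is nef, the remainder is effective (its $C$-coefficient $m + t^* - s$ is positive), the support of the remainder agrees with that of $N_{t^*}$ so its intersection matrix remains negative definite, and the orthogonality $P_{t^*} \cdot F = 0$ on each component $F$ is inherited from the Zariski decomposition at $t = t^*$. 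By uniqueness this really is the Zariski decomposition of $D - sC$; specializing to $s = 0$ forces $N = (m + t^*) C + N_{t^*}'$, which contradicts the standing hypothesis $C \not\subseteq \Supp(N)$.

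Once this rigidity is established, $\mult_C N_{t_1} = \mult_C N_{t_2} = 0$, and the inequality from the first step upgrades to $N_{t_2} \leq N_{t_1}$. I would emphasize that the hypothesis $C \not\subseteq \Supp(N)$ is genuinely used: inside a Zariski chamber where $C$ belongs to the negative support, $\mult_C N_t$ varies linearly with slope $-1$ in $t$, so without this hypothesis the conclusion of the corollary would fail.
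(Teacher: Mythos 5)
Your proof is correct and follows essentially the same route as the paper: use the extremal characterization of the Zariski decomposition to deduce $N_{t_2} \leq N_{t_1} + (t_1 - t_2)C$, and then rule out $C$ from $\Supp(N_{t_2})$ via uniqueness of the Zariski decomposition, so that the $C$-term can be dropped. The paper's own proof is just a pointer to the argument inside Lemma~\ref{nakp} (which handles $t_2 = 0$), with the fact that $C$ cannot appear in $\Supp(N_t)$ recorded separately as Lemma~\ref{ptlem}(1); you prove that rigidity inline and in slightly greater generality, but the underlying mechanism is identical.
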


\begin{proof}
The assertion was already shown in the proof of Lemma \ref{nakp}.
\end{proof}

Next, we show the rationality of the Nakayama constant of a non-big pseudoeffective divisor.

\begin{lemma}\label{nakrat}
Let $D$ be a pseudoeffective $\Q$-divisor on a surface $S$, and $D = P+N$ be the Zariski decomposition. If $D$ is not big, then $\mu(D; C)$ is a rational number for any integral curve $C$ in $S$.
\end{lemma}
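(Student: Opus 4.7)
The plan is to reduce the problem via Lemma~\ref{nakp} to proving that $\mu(P;C) \in \Q$ for the nef $\Q$-divisor $P$, and then to case-split on the sign of $P \cdot C$. Indeed, writing $D = P + N$ and noting that $N$ is automatically a $\Q$-divisor, Lemma~\ref{nakp} gives $\mu(D;C) = \mu(P;C) + \mult_C N$ with $\mult_C N \in \Q$. Since $D$ is not big, neither is $P$, so $P^2 = \vol_S(P) = 0$. If $P \cdot C > 0$, then for each $s > 0$ the pairing $(P - sC) \cdot P = -s(P \cdot C) < 0$ contradicts pseudoeffectiveness of $P - sC$ (pseudoeffective classes pair non-negatively with nef classes on a surface), whence $\mu(P;C) = 0$.

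The substantive case is $P \cdot C = 0$ with $\mu := \mu(P;C) > 0$. My plan is to analyze the Zariski decomposition $P - \mu C = Q + M$ of the boundary class. Pairing with $P$ gives $0 = (P - \mu C) \cdot P = Q \cdot P + \sum_i a_i (M_i \cdot P)$ where $M = \sum_i a_i M_i$, and since every summand is non-negative ($Q$ nef, each $M_i$ a curve, $P$ nef), all of them vanish: $Q \cdot P = 0$ and $M_i \cdot P = 0$. By Hodge Index, the hyperplane $P^\perp$ is negative semi-definite with kernel $\langle P \rangle$; combining $Q \in P^\perp$ with $Q^2 \geq 0$ forces $Q \equiv \lambda P$ for some $\lambda \geq 0$.

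The key step is to use the homogeneity of the Nakayama constant to force $\lambda = 0$. Maximality of $\mu$ prevents $C$ from appearing in $\Supp(M)$ (otherwise $P - (\mu+\varepsilon)C = Q + (M - \varepsilon C)$ would remain pseudoeffective for small $\varepsilon > 0$), so $\mult_C M = 0$ and Lemma~\ref{nakp} applied to $P - \mu C$ gives $\mu(Q;C) = \mu(P - \mu C;C) = 0$. On the other hand, the scaling law gives $\mu(Q;C) = \mu(\lambda P;C) = \lambda \mu$, so $\lambda \mu = 0$ and hence $\lambda = 0$, i.e., $Q \equiv 0$. Thus $P \equiv \mu C + M$, and intersecting with each $M_j$ while using $P \cdot M_j = 0$ yields the linear system $\sum_i a_i(M_i \cdot M_j) = -\mu(C \cdot M_j)$; the Gram matrix $(M_i \cdot M_j)$ has rational entries and is negative definite (hence invertible), so $a_i = \mu q_i$ with $q_i \in \Q$. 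Therefore $P \equiv \mu\bigl(C + \sum_i q_i M_i\bigr)$ exhibits $P$ as a scalar multiple of a nonzero rational class, and comparing coordinates in $N^1(S)_\Q$ forces $\mu \in \Q$.

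The main obstacle is the subcase $P \cdot C = 0$: without knowing $\mu \in \Q$ in advance, $P - \mu C$ need not be a $\Q$-divisor, so the Zariski parts $Q$ and $M$ may fail to be rational a priori. The delicate point is the combination of Hodge Index on the semi-negative hyperplane $P^\perp$ with the scaling behaviour of the Nakayama constant, which pins the nef part $Q$ down to be numerically trivial; once that is in hand, the negative-definite intersection matrix of the components of $N$ produces the rationality of $\mu$ as a routine consequence.
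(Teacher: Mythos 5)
Your proposal is correct and follows the same overall strategy as the paper: reduce via Lemma~\ref{nakp} to the positive part $P$, split on the sign of $P\cdot C$, and when $P\cdot C=0$ use the Hodge Index Theorem together with maximality of $\mu$ to force the nef part of $P-\mu C$ to vanish, leaving $P\equiv\mu C+M$ with $M$ negative-definite. The differences are in how you execute individual steps, and they are mostly to your credit. For the case $P\cdot C>0$ you use the elementary pairing $(P-sC)\cdot P=-s\,P\cdot C<0$ against the fact that pseudoeffective classes pair non-negatively with nef classes; this bypasses the paper's appeal to Theorem~\ref{surfokbd} and the dimension bound on the limiting Okounkov body, and is cleaner and more self-contained. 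For the vanishing of the nef part $Q\equiv\lambda P$, you invoke the homogeneity $\mu(\lambda P;C)=\lambda\,\mu(P;C)$ combined with Lemma~\ref{nakp} applied to $P-\mu C$, whereas the paper merely writes ``by the definition of the Nakayama constant''; the intended argument there is presumably that $\lambda>0$ would give $(1-\lambda)P\equiv\mu C+M\geq 0$, forcing $\lambda<1$ and making $P-\tfrac{\mu}{1-\lambda}C$ pseudoeffective, contradicting maximality of $\mu$ --- your scaling argument is a valid alternative. Finally, you make explicit the concluding rationality step (the Gram matrix $(M_i\cdot M_j)$ is integral and negative definite, so the $a_i$ lie in $\mu\,\Q$ and $P\equiv\mu\bigl(C+\sum_i q_iM_i\bigr)$ with the bracketed class rational and nonzero), which the paper states but does not spell out. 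All steps check out.
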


\begin{proof}
By Lemma \ref{nakp}, we have $\mu(D; C) = \mu(P; C) + \mult_C N$. Thus it suffices to show that $\mu:=\mu(P; C)$ is a rational number. Let $P-\mu C = P_{\mu} + N_{\mu}$ be the Zariski decomposition. Suppose that $P.C>0$. By Theorem \ref{surfokbd}, we get
$$
\oklim_{C_\bullet}(P) \supseteq \{ (0, x_2) \mid 0 \leq x_2 \leq P.C \}
$$
where $C_\bullet: \{x \} \subseteq C \subseteq S$ and $x \in C$ is any point. Since $P$ is not big,
it follows that $\mu=0$. It remains to consider the case $P.C=0$. Then we have $P.P_{\mu}=0$. By the Hodge index theorem, we have $P_{\mu}=kP$ for some $k \geq 0$. By the definition of the Nakayama constant, we get $k=0$. Thus $P = N_{\mu} + \mu C$. We can conclude that $\mu$ is a rational number.
\end{proof}

\begin{remark}
If $D$ is big, then $\mu(D;C)$ is a rational number or satisfies a quadratic equation over $\Q$ by \cite[Proposition 2.2]{KLM}.
\end{remark}

We further study some easy properties of the Zariski decompositions of divisors of the form $P-tC$.

\begin{lemma}\label{ptlem}
Let $D$ be a pseudoeffective divisor on $S$ with the Zariski decomposition $D=P+N$, and $C$ be an integral curve in $S$. Assume that $C$ is not an irreducible component of $N$.
For $t>0$, assume that $D-tC$ is pseudoeffective so that we have the Zariski decomposition $D-tC = P_t + N_t$. Then we have the following:
\begin{enumerate}[\indent$(1)$]
 \item $C$ is not a component of $N_t$.
 \item If $E$ is an integral curve such that $P.E=0$, $E^2<0$, and $E \neq C$, then $P_t + (N_t+sE)$ is the Zariski decomposition for $s \geq 0$.
\end{enumerate}
\end{lemma}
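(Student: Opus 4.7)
My plan is to invoke the uniqueness of the Zariski decomposition in both parts.

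For part (1), I would argue by contradiction. Suppose $C$ appears in $N_t$ with positive multiplicity $a$, and write $N_t = aC + N_t'$ with $C \notin \Supp(N_t')$. Then
\[
D = (D - tC) + tC = P_t + (a + t)C + N_t'
\]
is a candidate Zariski decomposition of $D$: the positive part $P_t$ is nef, the candidate negative part is effective, the orthogonality $P_t \cdot (\,\cdot\,) = 0$ on each irreducible component is inherited from $D - tC = P_t + N_t$ since the supports coincide, and the intersection matrix of the components of $(a+t)C + N_t'$ is identical to that of $N_t$, hence negative definite. By uniqueness of the Zariski decomposition this must agree with $P + N$, forcing $C \in \Supp(N)$ and contradicting the hypothesis.

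For part (2), the strategy is to verify that $(P_t,\, N_t + sE)$ satisfies all four defining properties of the Zariski decomposition of $D - tC + sE$. The main step is to prove $P_t \cdot E = 0$. The case $E \in \Supp(N_t)$ is immediate from the decomposition $D - tC = P_t + N_t$. For $E \notin \Supp(N_t)$, I would use the identity $P + N = D = P_t + N_t + tC$ to write $P_t = P - tC - (N_t - N)$. The corollary following Lemma \ref{nakp} gives $N_t - N \geq 0$ with support contained in $\Supp(N_t)$, and hence disjoint from $E$. Combined with $P \cdot E = 0$ and $C \cdot E \geq 0$ (as $E \neq C$ are distinct integral curves), this yields $P_t \cdot E \leq 0$, and the reverse inequality is the nefness of $P_t$.

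The remaining Zariski conditions are routine, except for the negative-definiteness of the intersection matrix of the components of $N_t + sE$ when $E \notin \Supp(N_t)$, which I expect to be the main technical obstacle. My plan is to observe that $E$ and the irreducible components $F_1, \ldots, F_k$ of $N_t$ all lie in $P_t^\perp$, which is negative definite by the Hodge index theorem when $P_t^2 > 0$. The negative-definiteness of the bordered intersection matrix then reduces to the linear independence of $E, F_1, \ldots, F_k$ in $N^1(S)$, which follows from an M-matrix argument: the matrix $-M$ of the components of $N_t$ is a nonsingular symmetric M-matrix (a Stieltjes matrix), so $(-M)^{-1} \geq 0$ entrywise, and any putative numerical relation $E \equiv \sum \alpha_i F_i$ would force $\alpha_i \leq 0$, producing a nonzero effective $\R$-divisor numerically equivalent to zero, which is impossible. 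The degenerate case $P_t^2 = 0$ can be handled by a small perturbation of $D$ by an ample divisor. With all Zariski conditions verified, uniqueness gives the claim.
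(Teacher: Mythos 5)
Your proof of part (1) is essentially the paper's: build the competing decomposition $D = P_t + (N_t + tC)$, check it satisfies all four Zariski conditions, and use uniqueness to force $C \in \Supp(N)$, contradicting the hypothesis.

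For part (2) you take a genuinely different route, and there is a gap. Both you and the paper start from $(P - tC)\cdot E \leq 0$. You use it to deduce $P_t\cdot E = 0$ and then attack negative-definiteness of the intersection matrix of $N_t + sE$ via the Hodge index theorem plus an M-matrix linear-independence argument. The paper instead uses the same inequality to deduce $N_t\cdot E \leq 0$; since $N_t\cdot E \geq 0$ whenever $E$ is not a component of $N_t$, this forces $N_t\cdot E = 0$, i.e.\ $E$ is \emph{disjoint} from $N_t$. Disjointness makes the matrix of $N_t + sE$ block diagonal, so negative-definiteness is immediate, and $P_t\cdot E = 0$ drops out as a byproduct. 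That is both shorter and sidesteps your case split entirely. The concrete gap in your version is the degenerate case $P_t^2 = 0$, which does occur (for instance when $t$ is near $\mu(D;C)$). Your plan to handle it by perturbing $D$ by an ample divisor does not work: if $D$ is replaced by $D + \epsilon A$, the positive part of its Zariski decomposition grows --- $P^{\epsilon} \geq P + \epsilon A$ since the negative part can only shrink --- so $P^{\epsilon}\cdot E \geq \epsilon A\cdot E > 0$, which destroys the hypothesis $P\cdot E = 0$ that your whole computation of $P_t\cdot E$ rests on. A genuine argument is needed here, and the paper's disjointness observation is exactly the one that supplies it. A minor secondary slip: from $\Supp(N_t - N)\subseteq\Supp(N_t)$ and $E\notin\Supp(N_t)$ you may conclude that $E$ is not a \emph{component} of $N_t - N$, not that it is disjoint from it (distinct integral curves can meet); fortunately the weaker statement $(N_t - N)\cdot E \geq 0$ is all you actually use.
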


\begin{proof}
(1) If $C$ is a component of $N_t$, then $D = P_t + (N_t + tC)$ is the Zariski decomposition. However, $N \neq N_t + tC$, so we get a contradiction.

\noindent (2) Note that $(P-tC).E \leq 0$. Thus $N_t.E \leq 0$, so either $E$ is an irreducible component of $N_t$ or $E$ does not meet $N_t$. For the latter case, we have $P_t.E=N_t.E=0$. Thus in any case, we obtain $P_t.(N_t+E)=0$ and  the intersection matrix of $N_t+sE$ is negative definite.
\end{proof}

The following is well known.

\begin{lemma}\label{dtoplem}
Let $D$ be a pseudoeffective divisor on $S$, and $D=P+N$ be the Zariski decomposition. Fix an admissible flag $C_\bullet : \{ x \} \subseteq C \subseteq S$. Then we have
$$
\oklim_{C_\bullet}(D)=\oklim_{C_\bullet}(P) + (\mult_C N, \ord_x ((N-(\mult_C N)C)|_C)).
$$
\end{lemma}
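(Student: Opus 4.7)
The plan is to deduce the identity by comparing the explicit slice descriptions from Theorem \ref{surfokbd} for $D$ and for $P$, once the Zariski decompositions of the one-parameter family $D - tC$ are related to those of $P - sC$. Set $a := \mult_C N$ and $N' := N - aC$, so the irreducible components $E_i$ of $N'$ are components of $N$ distinct from $C$, and each satisfies $P . E_i = 0$ and $E_i^2 < 0$. By Lemma \ref{nakp}, $\mu(D; C) = \mu(P; C) + a$, so the $x_1$-range $[a, \mu(D; C)]$ appearing in Theorem \ref{surfokbd} for $\oklim_{C_\bullet}(D)$ is precisely the range $[0, \mu(P; C)]$ for $\oklim_{C_\bullet}(P)$ shifted by $a$. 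I would parametrize by $s := t - a$.

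Next, for $s \in [0, \mu(P; C)]$, let $P - sC = Q_s + M_s$ be the Zariski decomposition. The key claim is that the Zariski decomposition of $D - (s+a)C = (P - sC) + N'$ equals $Q_s + (M_s + N')$. I would prove this by iteratively applying Lemma \ref{ptlem}(2) to adjoin the components $E_i$ one at a time: each step keeps the positive part $Q_s$ unchanged and appends $b_i E_i$ to the negative part. The hypotheses $P . E_i = 0$, $E_i^2 < 0$, and $E_i \neq C$ hold by the setup, and negative definiteness of the accumulating intersection matrix is inherited from that of the full component set of $N$.

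Having matched the Zariski decompositions, the functions $\alpha$, $\beta$ of Theorem \ref{surfokbd} satisfy
\[
\alpha_D(s+a) = \ord_x((M_s + N')|_C) = \alpha_P(s) + \ord_x(N'|_C),
\]
and $\beta_D(s+a) = \alpha_D(s+a) + C . Q_s = \beta_P(s) + \ord_x(N'|_C)$, using linearity of $\ord_x$ together with $C \not\subseteq \Supp(M_s + N')$. Consequently, the vertical slice of $\oklim_{C_\bullet}(D)$ over $x_1 = s + a$ is the slice of $\oklim_{C_\bullet}(P)$ over $x_1 = s$ translated by $\ord_x(N'|_C)$, yielding the claimed identity. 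The main technical obstacle will be the iterative use of Lemma \ref{ptlem}(2): at each stage one must pick the correct "base" divisor (namely $P + \sum_{j<i} b_j E_j$, whose Zariski decomposition has positive part $P$) so that the lemma's hypothesis $P . E = 0$ is applied to the original $P$ rather than to an intermediate positive part. Once this bookkeeping is clean, the computation of $\alpha$ and $\beta$ is routine.
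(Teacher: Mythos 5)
Your proposal is correct and takes essentially the same approach as the paper: both reduce to matching Zariski decompositions of the one-parameter family $D-tC$ with those of $P-sC$ (shifted by $a=\mult_C N$) via Lemma~\ref{ptlem}(2) applied to the components of $N'=N-(\mult_C N)C$, and then read off the claimed identity from the slice description $\alpha,\beta$ in Theorem~\ref{surfokbd}. The paper's write-up is terser -- it first reduces to the case $\mult_C N=0$ and then invokes Lemma~\ref{ptlem} once without spelling out the component-by-component iteration and the bookkeeping you carefully note -- but the underlying argument is identical to yours.
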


\begin{proof}
First, consider the case $C \subseteq \bm(D)$. Set $a:=\mult_C N>0$. It is easy to see that
$$
\oklim_{C_\bullet}(D) = \oklim_{C_\bullet} (D-aC) + (a,0).
$$
By replacing $D$ by $D-aC$, we may only consider the case $C \not\subseteq \bm(D)$. Then it is sufficient to show that
\begin{equation}\tag{!}\label{okdvsp}
\oklim_{C_\bullet}(D) = \oklim_{C_\bullet}(P) + (0, \ord_x (N|_C)).
\end{equation}
Fix $t >0$ such that $P-tC$ is pseudoeffective. Let $P-tC = P_t + N_t$ be the Zariski decomposition. By Lemma \ref{ptlem}, $P+N-tC = P_t + (N_t+N)$ is the Zariski decomposition. Then the assertion (\ref{okdvsp}) now follows from Theorem \ref{surfokbd}.
\end{proof}

\begin{remark}
We can easily verify that a similar statement of Lemma \ref{dtoplem} holds for the valuative Okounkov body $\okval_{C_\bullet}(D)$ of an effective divisor $D$. Let $D=P_s+N_s$ be the $s$-decomposition. Then we have
$$
\okval_{C_\bullet}(D)=\okval_{C_\bullet}(P_s) + (\mult_C N_s, \ord_x ((N_s-(\mult_C N_s)C)|_C)).
$$
\end{remark}

\subsection{Asymptotic base loci via Okounkov bodies}

Here we give simpler proofs for the following two theorems.
These were first shown in \cite[Theorem A]{AV-loc pos} for big divisors.

\begin{theorem}\label{bm}
Let $D$ be a pseudoeffective divisor on a surface $S$. Then the following are equivalent:
\begin{enumerate}[\indent$(1)$]
 \item $x \in \bm(D)$.
 \item For every flag $C_\bullet : \{x\} \subseteq C \subseteq S$, the limiting Okounkov body $\oklim_{C_\bullet}(D)$ does not contain the origin of $\R^2$.
 \item For some flag $C_\bullet : \{x\} \subseteq C \subseteq S$, the limiting Okounkov body $\oklim_{C_\bullet}(D)$ does not contain the origin of $\R^2$.
\end{enumerate}
\end{theorem}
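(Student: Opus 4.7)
The plan is to reduce everything to Lemma \ref{dtoplem}, which expresses
$$\oklim_{C_\bullet}(D) = \oklim_{C_\bullet}(P) + v, \qquad v := \bigl(\mult_C N,\ \ord_x((N-(\mult_C N)C)|_C)\bigr) \in \R_{\geq 0}^2,$$
combined with the identification $\bm(D)=\Supp(N)$ recorded in Section \ref{sec-prem}. Since $\oklim_{C_\bullet}(P) \subseteq \R_{\geq 0}^2$, the origin lies in $\oklim_{C_\bullet}(D)$ if and only if $v = 0$ and $(0,0) \in \oklim_{C_\bullet}(P)$. So the proof splits into verifying the second condition automatically, and analysing when $v=0$.

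For the first piece, I claim $(0,0) \in \oklim_{C_\bullet}(P)$ for every nef divisor $P$. For any ample $A$ and $\epsilon>0$, the divisor $P+\epsilon A$ is ample, so its negative part is zero, and Theorem \ref{surfokbd} places the whole segment $\{0\}\times[0,(P+\epsilon A).C]$ on the left edge of $\okbd_{C_\bullet}(P+\epsilon A)$; in particular it contains the origin. Intersecting over $\epsilon\to 0^+$ yields the claim.

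With this in hand, the three implications become routine. For $(1)\Rightarrow(2)$: if $x\in\bm(D)=\Supp(N)$, then for any flag $C_\bullet$ through $x$, either $C$ is a component of $N$ (so $\mult_C N>0$) or some component $N_i\neq C$ of $N$ passes through $x$, contributing a positive local intersection $(N_i\cdot C)_x$ to $\ord_x(N|_C)$; in both cases $v\neq 0$. The step $(2)\Rightarrow(3)$ is trivial. For $(3)\Rightarrow(1)$ I argue contrapositively: if $x\notin\Supp(N)$, then no component of $N$ meets $x$, so every flag through $x$ has $\mult_C N=0$ and $\ord_x(N|_C)=0$, forcing the origin into $\oklim_{C_\bullet}(D)$ and contradicting (3). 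The only mildly delicate point is the positivity of $\ord_x(N|_C)$ when a component $N_i\neq C$ of $N$ meets $C$ at $x$, which is immediate because $x$ is a smooth point of $C$ and any distinct curve through $x$ contributes strictly positively to the local intersection.
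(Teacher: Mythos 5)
Your proof is correct and follows essentially the same route as the paper's: both arguments ultimately rest on the description of $\oklim_{C_\bullet}(D)$ from Theorem \ref{surfokbd}, and both reduce the question to whether the pair $(\mult_C N,\ \ord_x((N-(\mult_C N)C)|_C))$ vanishes. Your version is a modest repackaging: you route through Lemma \ref{dtoplem} and isolate the observation that $(0,0)\in\oklim_{C_\bullet}(P)$ for any nef $P$ as a standalone claim, so that the whole equivalence becomes the statement ``origin belongs to $\oklim_{C_\bullet}(D)$ iff the translate vector $v$ is zero.'' The paper handles this same content more tersely, invoking Theorem \ref{surfokbd} directly in both the $(1)\Rightarrow(2)$ and $(3)\Rightarrow(1)$ steps without factoring out the nef case. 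Both are valid; yours is a slightly more modular presentation of the identical idea.
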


\begin{proof}
Let $D = P+N$ be the Zariski decomposition.

\noindent$(1)\Rightarrow(2)$: Since $\ord_x (N|_C) >0$, the assertion follows from Theorem \ref{surfokbd}.

\noindent$(2)\Rightarrow(3)$: It is obvious.

\noindent$(3)\Rightarrow(1)$: By Theorem \ref{surfokbd}, we have either $\mult_C N >0$ or $\ord_x (N|_C) >0$. In both cases, an irreducible component of $N$ passes through $x$, so $x \in \bm(D)$.
\end{proof}

\begin{remark}\label{9ptsex}
We cannot replace $\oklim_{C_\bullet}(D)$ by $\okval_{C_\bullet}(D)$. For an explicit example, we consider the blow-up $\pi : S \to \P^2$ of $\P^2$ at 9 general points on a cubic curve $C$ in $\P^2$. Note that $-K_S = \pi^{-1}_* C$ is nef and $\kappa(-K_S)=0$. Consider an admissible flag $C_{\bullet} : \{x \} \subseteq \pi^{-1}_*C \subseteq S$, where $x$ is any smooth point in $\pi^{-1}_*C $. Then we can easily see that
$$
\okval_{C_\bullet}(-K_S) = \{ (1, 0) \}
$$
which does not contain the origin even though $-K_S$ is nef. However, we have
$$
\oklim_{C_\bullet}(-K_S) = \{(x_1, 0) \mid 0 \leq x_1 \leq 1 \}
$$
which contains the origin of $\R^2$.
\end{remark}

\begin{theorem}
Let $D$ be a pseudoeffective divisor on a surface $S$. Then the following are equivalent:
\begin{enumerate}[\indent$(1)$]
 \item $x \in \bp(D)$.
 \item For every flag $C_\bullet : \{x\} \subseteq C \subseteq S$, the limiting Okounkov body $\oklim_{C_\bullet}(D)$ does not contain $U \cap \R_{\geq 0}^2$ where $U$ is a small open neighborhood of the origin of $\R^2$.
 \item For some flag $C_\bullet : \{x\} \subseteq C \subseteq S$, the limiting Okounkov body $\oklim_{C_\bullet}(D)$ does not contain $U \cap \R_{\geq 0}^2$ where $U$ is a small open neighborhood of the origin of $\R^2$.
\end{enumerate}
\end{theorem}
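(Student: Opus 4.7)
The plan is to prove $(2) \Rightarrow (3)$ trivially, $(1) \Rightarrow (2)$ by analyzing the slice of $\oklim_{C_\bullet}(D)$ at $x_1 = 0$ with the aid of Theorem \ref{surfokbd}, and $(3) \Rightarrow (1)$ by contraposition.

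For $(1) \Rightarrow (2)$, fix an arbitrary flag $C_\bullet : \{x\} \subseteq C \subseteq S$ and write the Zariski decomposition $D = P + N$. If $D$ is not big, Theorem \ref{maincomp} says $\oklim_{C_\bullet}(D)$ is at most $1$-dimensional, hence cannot contain any $2$-dimensional subset, let alone $U \cap \R^2_{\geq 0}$. If $D$ is big, the identity $\bp(D) = \Null(P)$ places $x$ on some irreducible curve $E$ with $P.E = 0$. I split into two subcases. If $E = C$, then $P.C = 0$ forces $\beta(0) = \alpha(0)$ in the notation of Theorem \ref{surfokbd}, so the slice of $\oklim_{C_\bullet}(D)$ at $x_1 = 0$ collapses to the single point $(0, \alpha(0))$; hence $(0, \eps)$ with $\eps \ne \alpha(0)$ and $\eps$ small is excluded, which rules out any quadrant-neighborhood of the origin. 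If $E \ne C$, then $x \in E \cap C$ yields $C.E > 0$, so $(P - tC).E < 0$ for every small $t > 0$, forcing $E \subseteq \Supp(N_t)$ with positive coefficient; hence $\alpha(t) = \ord_x(N_t|_C) > 0$, and $(t, 0) \notin \oklim_{C_\bullet}(D)$ for small $t > 0$. Either way the body misses a short segment emanating from the origin.

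For the contrapositive of $(3) \Rightarrow (1)$, assume $x \notin \bp(D)$. Then $D$ is big (else $\bp(D) = S \ni x$) and $x \notin \Null(P) \supseteq \Supp(N)$. For any admissible flag $C_\bullet$ through $x$, the curve $C$ meets $x \notin \Null(P)$, so $P.C > 0$; also $C \not\subseteq \Supp(N)$, giving $\mult_C N = 0$, $\alpha(0) = \ord_x(N|_C) = 0$, and $\beta(0) = P.C > 0$. For small $t > 0$, the irreducible components of $N_t$ are either components of $N$ (which avoid $x$ by hypothesis) or new curves $E$ with $P.E = 0$ and $C.E > 0$ (which lie in $\Null(P)$ and so also avoid $x$). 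Thus $\ord_x(N_t|_C) = 0$ identically for $t$ in some right-neighborhood of $0$, i.e., $\alpha \equiv 0$ there, while $\beta$ is continuous with $\beta(0) > 0$. Theorem \ref{surfokbd} then gives $\oklim_{C_\bullet}(D) \supseteq [0, \delta] \times [0, \delta']$ for some $\delta, \delta' > 0$, a full neighborhood of the origin in $\R^2_{\geq 0}$.

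The only non-routine step in both directions is identifying which irreducible curves may appear in the negative part $N_t$ as $t$ grows from $0$. On a surface one checks, via the maximality property of the positive part and the implication $(P - tC).E < 0 \Rightarrow E \subseteq \Supp(N_t)$ (cf.\ Lemma \ref{ptlem}), that for sufficiently small $t > 0$ the new components are exactly the null curves of $P$ meeting $C$ positively. Once this is in hand, everything else is direct bookkeeping with the formulas of Theorem \ref{surfokbd}.
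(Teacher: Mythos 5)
Your $(1) \Rightarrow (2)$ argument is essentially the paper's: both proofs observe $\bp(D) = \Null(P)$, pick a null curve $E$ of $P$ through $x$, and split on $E = C$ (where $P.C = 0$ collapses the $x_1 = 0$ slice) versus $E \ne C$ (where $C.E > 0$ forces $E$ into $\Supp(N_t)$ for $t > 0$, so $\alpha(t) > 0$). One wrinkle you leave implicit: the case where $C$ itself is a component of $N$, so that $\alpha(0), \beta(0)$ are undefined; but then $\mult_C N > 0$ and the body lies in $\{x_1 \geq \mult_C N\}$, so the conclusion is immediate and your argument is unaffected.

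Your $(3) \Rightarrow (1)$ direction, however, takes a genuinely different route. The paper proceeds via superadditivity: choosing $\eps$ so that $\bp(D) = \bm(D - \eps A)$, applying Theorem \ref{bm} to get the origin in $\oklim_{C_\bullet}(D - \eps A)$, and then invoking $\oklim_{C_\bullet}(D - \eps A) + \oklim_{C_\bullet}(\eps A) \subseteq \oklim_{C_\bullet}(D)$ with $\oklim_{C_\bullet}(\eps A)$ a full quarter-neighborhood of $0$. You instead read the containment off the explicit description in Theorem \ref{surfokbd}: from $x \notin \Null(P) \supseteq \Supp(N)$ you get $P.C > 0$, $\mult_C N = 0$, $\alpha(0) = 0$, $\beta(0) > 0$, and then you argue $\alpha \equiv 0$ on a right-neighborhood of $0$. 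The paper's route is shorter and avoids the only delicate point in yours, namely the assertion that for all small $t > 0$ every component of $N_t$ still avoids $x$. That assertion is true, but it needs the local finiteness of the Zariski chamber decomposition (Theorem \ref{thrm-BKS decomposition}) to rule out curves $E$ with $P.E > 0$ entering $N_t$ at arbitrarily small $t$; your appeal to the maximality property of the positive part and Lemma \ref{ptlem} alone does not give this uniformity, so you should cite the chamber finiteness explicitly. A secondary inaccuracy: the new components of $N_t$ for small $t$ need not all satisfy $C.E > 0$—a null curve of $P$ disjoint from $C$ can be forced into $N_t$ through the negative-definite intersection form once a neighbouring null curve enters. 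This does not harm your proof, since all you really use is that any such $E$ lies in $\Null(P)$ and hence misses $x$, but the stated characterization is slightly off.
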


\begin{proof}
If $D$ is pseudoeffective but not big, then $\bp(D)=S$ and $\dim \oklim_{C_\bullet}(D) < 2$ for any flag $C_\bullet$. In this case, there is nothing to prove.
Thus we only have to consider the case where $D$ is big. Let $D = P+N$ be the Zariski decomposition.

\noindent$(1)\Rightarrow(2)$: By considering Theorem \ref{bm}, we can assume that $x \in \bp(D) \setminus \bm(D)$. By Lemma \ref{dtoplem}, we obtain $\oklim_{C_\bullet}(D) = \oklim_{C_\bullet}(P)$.
We divide into two cases. First, consider the case $C \subseteq \bp(P) = \text{Null}(P)$, i.e., $P.C=0$. In this case, $\oklim_{C_\bullet}(P)$ does not meet the $x_2$-axis by Theorem \ref{surfokbd}. More precisely, for any $(0, y)$ with $y>0$, we have $(0, y) \not\in \oklim_{C_\bullet}(P)$. Now, consider the remaining case $C \not\subseteq \bp(P)$. We can take an integral curve $E$ such that $x \in E \subseteq \bp(P)=\text{Null}(P)$. Note that $C\neq E$ but both $C$ and $E$ contain $x$. Thus $C.E >0$ so that $(P-tC).E<0$ for all $t>0$.
Since $P$ is big, $P-t_0C$ is pseudoeffective for some $t_0 >0$.
Let $P-t_0C = P_{t_0} + N_{t_0}$ be the Zariski decomposition. Then $N_{t_0}.E<0$ so that $E$ is an irreducible component of $N_{t_0}$. Thus $\ord_x(N_{t_0}|_C) >0$. In view of Theorem \ref{surfokbd}, $\oklim_{C_\bullet}(D)$ does not meet the $x_1$-axis. That is, for any $(y,0)$ with $y>0$, we have $(y, 0) \not\in \oklim_{C_\bullet}(D)$.

\noindent$(2)\Rightarrow(3)$: It is obvious.

\noindent$(3)\Rightarrow(1)$: Suppose that $x \not\in \bp(D)$. For an ample divisor $A$ and a sufficiently small $\epsilon >0$, we have $\bp(D) = \bm(D-\epsilon A)$. By Theorem \ref{bm}, the origin is contained in $\oklim_{C_\bullet}(D-\epsilon A)$ for any admissible flag $C_\bullet$.
Now we have
$$
\oklim_{C_\bullet}(D-\epsilon A ) + \oklim_{C_\bullet}(\epsilon A) \subseteq \oklim_{C_\bullet}(D).
$$
Since $\oklim_{C_\bullet}(\epsilon A)$ contains $U \cap \R_{\geq 0}^2$, so does $\oklim_{C_\bullet}(D)$.
\end{proof}

\subsection{Computing limiting Okounkov bodies}

We now prove the main results of this section. When $D$ is a big divisor on a surface $S$, Theorem \ref{surfokbd} and \cite[Theorem B]{KLM} completely characterize the Okounkov body $\okbd_{C_\bullet}(D)$ of $D$ with respect to any admissible flag $C_\bullet$.
Our main results, Theorems \ref{compoklim} and \ref{exslope}, can be regarded as a natural extension of \cite[Theorem B]{KLM} to the case of pseudoeffective divisors.

\begin{theorem}\label{compoklim}
Let $D$ be a non-big pseudoeffective divisor on a surface $S$, and $D=P+N$ be the Zariski decomposition. Fix an admissible flag $C_\bullet : \{ x \} \subseteq C \subseteq S$. Then
$$
\oklim_{C_\bullet}(D)=\oklim_{C_\bullet}(P) + (\mult_C N, \ord_x ((N-(\mult_C N)C)|_C)),
$$
and $\oklim_{C_\bullet}(P)$ is given as follows:
\begin{enumerate}[\indent$(1)$]
 \item Suppose that $P.C>0$. Then $C$ is a positive volume subvariety of $D$ and $\kappanu(D)=1$. Furthermore, we have
$$
\oklim_{C_\bullet}(P) = \{ (0, x_2) \mid 0 \leq x_2 \leq P.C \}.
$$
Hence, $\dim ( \oklim_{C_\bullet}(D)) = 1$ and $\vol_{\R^1}(\oklim_{C_\bullet}(D)) = \vol_{S|C}^+(D) = P.C$.

 \item Suppose that $P.C=0$. Let $\mu:=\mu(D;C)$. If $\mu>0$, then $\kappa_{max}(D) \geq 0$, and we can write $P \equiv \mu C + N'$ for some effective divisor $N'$. In this case, we have
$$
\begin{array}{c}
\oklim_{C_\bullet}(P) = \{ ( x_1, x_2) \mid 0 \leq x_1 \leq \mu, x_2 = \frac{\ord_x(N'|_C)}{\mu}x_1 \}
\end{array}
$$
Furthermore, $\dim ( \oklim_{C_\bullet}(D)) = 0$ if $\mu=0$ and $\dim ( \oklim_{C_\bullet}(D)) =1= \kappanu(D)$ if $\mu>0$.
\end{enumerate}
\end{theorem}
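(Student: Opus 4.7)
By Lemma~\ref{dtoplem} the translation identity $\oklim_{C_\bullet}(D)=\oklim_{C_\bullet}(P)+(\mult_C N,\ord_x((N-(\mult_C N)C)|_C))$ holds, so it suffices to compute $\oklim_{C_\bullet}(P)$ for the nef, non-big positive part $P$; on a surface this forces $P^2=0$. The plan is to apply Theorem~\ref{surfokbd} to $D=P$ (so the leftmost abscissa $a$ vanishes) and analyze the Zariski decompositions $P-tC=P_t+N_t$ for $0\le t\le \mu(P;C)$, using throughout that nef$\,\cdot\,$nef and nef$\,\cdot\,$effective pairings are nonnegative.

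In Case~(1), if $P-sC$ were pseudoeffective for some $s>0$, pairing with $P$ would give $0\le (P-sC).P=-s\,P.C<0$, a contradiction; so $\mu(P;C)=0$ and Theorem~\ref{surfokbd} collapses $\oklim_{C_\bullet}(P)$ to the segment $\{0\}\times[0,P.C]$. Non-bigness of $D$ together with $\dim\oklim=1$ forces $\kappanu(D)=1$, and $C\not\subseteq\bm(D)=\Supp(N)$ (else $P.N=0$ would require $P.C\le 0$); hence $C$ is a positive volume subvariety and the volume formula follows from Theorem~\ref{chpwthm}.

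In Case~(2) the core of the argument is new. Intersecting $P-tC=P_t+N_t$ with $P$ and using $P^2=P.C=0$ gives $P.P_t=P.N_t=0$; then $P_t^2=-t\,P_t.C$, and nefness of $P_t$ forces $P_t.C=0=P_t^2$ for every $t\in[0,\mu]$. In Theorem~\ref{surfokbd} this collapses $\beta(t)-\alpha(t)=C.P_t$ to $0$, so $\oklim_{C_\bullet}(P)$ degenerates to the graph of $\alpha$ and hence, by convexity, to a line segment emanating from the origin. The crux is to identify $N'$, which reduces to showing $P_\mu\equiv 0$. The classes $P$ and $P_\mu$ are nef with $P^2=P_\mu^2=P.P_\mu=0$, i.e.\ they span a totally isotropic subspace of $\mathbb{N}^1(S)_\R$; the Hodge signature $(1,\rho-1)$ forbids such a subspace of dimension $>1$, so $P_\mu\equiv cP$ with $c\ge 0$. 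Rewriting $(1-c)P\equiv \mu C+N_\mu$, the maximality defining $\mu=\mu(P;C)$ rules out $c\in(0,1)$ (which would give $\mu(P;C)\ge \mu/(1-c)>\mu$), while $c\ge 1$ contradicts the fact that the left-hand side pairs nonpositively with any ample class whereas the effective, nonzero right-hand side pairs strictly positively. Hence $c=0$, so $P\equiv \mu C+N'$ with $N':=N_\mu$ effective and in particular $\kappa_{max}(D)\ge 0$. Choosing the representative $P=\mu C+N'$, one verifies directly that $(1-t/\mu)P+(t/\mu)N'$ is the Zariski decomposition of $P-tC$, making $\alpha(t)=(t/\mu)\ord_x(N'|_C)$ linear and yielding the asserted formula.

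Finally, rationality of the slope and endpoints of $\oklim_{C_\bullet}(D)$ when $D$ is a $\Q$-divisor is immediate from the rationality of the Zariski decomposition of a $\Q$-divisor together with Lemma~\ref{nakrat}. For the converse, given $r=p/q\in\Q_{\ge 0}$, I would exhibit an explicit blow-up construction (a suitably iterated blow-up of a simple ruled surface) producing a non-big nef class $P$ with $P.C=0$ whose decomposition $P\equiv \mu C+N'$ has $\ord_x(N'|_C)$ and $\mu$ independently prescribable through the intersection combinatorics, thereby realizing the prescribed ratio $r$. The principal obstacle throughout is the Hodge-index-plus-Nakayama-maximality argument used to force $P_\mu\equiv 0$ in Case~(2); once this is in hand, the rest of the theorem follows mechanically from Theorem~\ref{surfokbd}.
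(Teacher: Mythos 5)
Your proposal is correct and follows essentially the same route as the paper: reduce to $D=P$ via Lemma~\ref{dtoplem}, read off case~(1) and the degenerate segment in case~(2) from Theorem~\ref{surfokbd}, and in case~(2) with $\mu>0$ use $P^2=P.C=0$ together with the Hodge index theorem and the maximality defining $\mu(P;C)$ to force $P_\mu=0$. The paper states this last step more tersely (``by the Hodge index theorem, $P_\mu=kP$; by the definition of the Nakayama constant, $k=0$''), while you spell out the isotropic-subspace reasoning and explicitly verify that $P-tC=(1-t/\mu)P+(t/\mu)N'$ is the Zariski decomposition, which makes $\alpha$ linear --- a detail the paper leaves implicit --- but this is elaboration rather than a different method. (Your final paragraph on realizing prescribed slopes belongs to Theorem~\ref{exslope}, not to the statement at hand, and can be dropped.)
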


\begin{proof}
By Lemma \ref{dtoplem}, we can assume that $D=P$.
By Theorem \ref{bm}, the origin of $\R^2$ is contained in $\oklim_{C_\bullet}(D)$.
If $P.C>0$, then the assertion immediately follows from Theorem \ref{chpwthm} and Theorem \ref{surfokbd} (see also \cite{chpw1}). It remains to consider the case $P.C=0$. If $\mu=0$, then $\oklim_{C_\bullet}(D)$ is the origin of $\R^2$, and there is nothing to prove. We now suppose that $\mu>0$. Let $P-\mu C = P_{\mu}+N_{\mu}$ be the Zariski decomposition. We claim that $P_{\mu}=0$. If this claim holds, then the remaining assertion is a direct consequence of Theorem \ref{surfokbd}. We have $0=P.(P-\mu C)=P.P_{\mu}+P.N_{\mu}$. Since $P^2=P.C=0$, it follows that $P.P_{\mu}=0$. By the Hodge index theorem, $P_{\mu}=kP$ for some $k \geq 0$. By the definition of the Nakayama constant, we obtain $k=0$, so we are done.
\end{proof}

\begin{remark}
For the case (2) of Theorem \ref{compoklim}, the volume of the limiting Okounkov body $\oklim_{C_\bullet}(D)$ is $\sqrt{\mu^2 +\ord_x(N'|_C)^2 }$.
However, the geometric meaning is not clear to us.
\end{remark}

\begin{remark}
Using Theorem \ref{compoklim}, one can easily check that $\vol^+_{S|C}(D)=\vol^+_{S|C}(P)=P.C$. Thus $C$ is a positive volume subvariety of $D$ if and only if $P.C>0$.
\end{remark}

Next examples show that all the cases in Theorem \ref{compoklim} do occur.

\begin{example}\label{oklimex}
(1) By \cite{chpw1}, there always exists a positive volume subvariety of any pseudoeffective divisor, so the first case of Theorem \ref{compoklim} does occur.\\
(2) We give examples of the second case of Theorem \ref{compoklim} with $\mu=0$.
For any flag $C_\bullet : \{x\} \subseteq C \subseteq S$, we have $\oklim_{C_\bullet}(\mathcal{O}_S)=\{ (0,0) \}$. In this case, $\kappanu(\mathcal{O}_S)=\kappa(\mathcal{O}_S)=0$. On the other hand, Example \ref{infokex} gives an example of $\oklim_{C_\bullet}(D) = \{ (0,0) \}$, but $\kappanu(D)=1$. \\
(3) Remark \ref{9ptsex} gives an example of the second case of Theorem \ref{compoklim} with $\mu>0$ and a horizontal limiting Okounkov body. In this case, note that $\kappa(-K_S)=0$.\\
(4) For an example of the second case of Theorem \ref{compoklim} with $\mu>0$ and a limiting Okounkov body with a positive slope, consider a fibration  $f: S \to C$ onto a curve $C$. Assume that there exists a fiber $F$ of $f$ such that we can write
$$
F=pC_1 + qC_2 + E
$$
where $C_1$ and $C_2$ are integral curves transversally meeting at a point $x$ and $E$ is an effective divisor whose support contains $x$, but does not contain neither $C_1$ nor $C_2$.

\begin{tikzpicture}[line cap=round,line join=round,>=triangle 45,x=1.0cm,y=1.0cm]
%\draw [color=cqcqcq,dash pattern=on 2pt off 2pt, xstep=1.0cm,ystep=1.0cm] (-4.3,-10.7) grid (24.66,6.3);
\clip(-4.3,-0.7) rectangle (24.66,2.7);
\draw (-1,0)-- (2,2);
\draw [line width=1.2pt] (1,2)-- (4,0);
\draw (5,2)-- (8,0);
\draw [dotted] (0,0)-- (-2,2);
\draw [line width=1.2pt] (6,2)-- (3,0);
\draw [dotted] (7,0)-- (9,2);
\draw (-2.66,1.24) node[anchor=north west] {$\cdots$};
\draw (9.22,1.22) node[anchor=north west] {$\cdots$};
\draw (3.26,0.25) node[anchor=north west] {$x$};
\draw (2.2,1.98) node[anchor=north west] {$pC_1$};
\draw (4.26,1.94) node[anchor=north west] {$qC_2$};
\begin{scriptsize}
\draw [fill=uuuuuu] (3.5,0.33) circle (1.5pt);
\end{scriptsize}
\end{tikzpicture}
For the existence of such a fibration, see the proof of Theorem \ref{exslope}.
Consider the admissible flags $C_{1\bullet} : \{x\} \subseteq C_1 \subseteq S$ and $C_{2\bullet}: \{x\} \subseteq C_2 \subseteq S$. Then we can see that
$$
\oklim_{C_{1\bullet}}(F) = \{(x_1, x_2) \mid 0 \leq x_1 \leq p, x_2 = \frac{q}{p}x_1 \} \text{ and }
\oklim_{C_{2\bullet}}(F) = \{(x_1, x_2) \mid 0 \leq x_1 \leq q, x_2 = \frac{p}{q}x_1 \}.
$$
In this case, note that $\kappa(F)=1$.
\end{example}

By Lemma \ref{nakrat} and Theorem \ref{compoklim}, the limiting Okounkov body $\oklim_{C_\bullet}(D)$ is a line segment in $\R^2$ with a rational slope when $D$ is a $\Q$-divisor.
We show that the converse of this statement also holds.

\begin{theorem}\label{exslope}
Let $r\in\Q_{\geq0}$ be any nonnegative rational number. Then there exist a smooth projective surface $S$, a pseudoeffective $\Q$-divisor $D$ on $S$, and an admissible flag $C_\bullet : \{ x \} \subseteq C \subseteq S$ such that the limiting Okounkov body $\oklim_{C_\bullet}(D)$ has a slope $r$.
\end{theorem}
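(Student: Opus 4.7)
Write $r=q/p$ in lowest terms with $p\geq 1$ and $q\geq 0$. The approach is to construct, for each such $r$, a smooth projective surface $S$, a nef non-big divisor $D$ (taken to be a reducible fiber of an auxiliary fibration), and an admissible flag $C_\bullet:\{x\}\subseteq C\subseteq S$ for which Case~(2) of Theorem~\ref{compoklim} applies with $\mu(D;C)=p$ and $\ord_x(N'|_C)=q$; the slope of $\oklim_{C_\bullet}(D)$ is then forced to be $q/p$.

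For $r>0$, I would consider the rational map $\varphi:\P^2\dashrightarrow\P^1$ defined by $[u:v:w]\mapsto[u^pv^q:w^{p+q}]$. Its indeterminacy locus consists of the two points $[0:1:0]$ and $[1:0:0]$, both distinct from $x:=[0:0:1]=L_u\cap L_v$. Iteratively blowing up these two points (together with their infinitely near points, via the classical Hirzebruch--Jung procedure) produces a smooth projective surface $\pi:S\to\P^2$ on which $\varphi$ extends to a morphism $f:S\to\P^1$. The fiber $F:=f^{-1}([0:1])$ decomposes as $F=pC_1+qC_2+E$, where $C_1:=\tilde L_u$, $C_2:=\tilde L_v$ are the strict transforms of the coordinate lines and $E$ is supported on the exceptional locus of $\pi$. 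Since every blow-up center lies above $\{[0:1:0],[1:0:0]\}$, the point $x$ lifts canonically to $S$, $C_1$ and $C_2$ still meet transversely only at $x$, and $\Supp(E)$ is disjoint from $x$.

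Setting $D=F$ with the flag $C_\bullet:\{x\}\subseteq C_1\subseteq S$: since $F$ is a nef fiber class with $F^2=0$ and $F\cdot C_1=0$, its Zariski decomposition is trivial and we are in Case~(2) of Theorem~\ref{compoklim}. The Hodge-index argument of Lemma~\ref{nakrat} applies verbatim to show that the positive part of the Zariski decomposition of $F-\mu C_1$ must be a nonnegative multiple $kF$ of $F$, and the defining maximality of $\mu$ forces $k=0$. Thus $F-\mu C_1$ is an effective divisor; combined with effectivity of $F-pC_1=qC_2+E$, this yields $\mu(F;C_1)=p$ and $N'=qC_2+E$. Using $x\notin\Supp(E)$ together with $(C_1\cdot C_2)_x=1$ gives $\ord_x(N'|_{C_1})=q$, and hence slope $q/p$, as desired.

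For $r=0$, an analogous simpler construction suffices: let $S:=\mathrm{Bl}_{y_0}(\P^1\times\P^1)$ for a point $y_0$ on a fiber $F_0$ of the first projection, take $D=F:=\tilde F_0+E_1$, and the flag $\{x\}\subseteq\tilde F_0$ for any $x\neq\tilde F_0\cap E_1$; the same argument yields $\mu=1$, $N'=E_1$, and $\ord_x(E_1|_{\tilde F_0})=0$, hence slope $0$. The principal obstacle is the Hirzebruch--Jung resolution step in the case $r>0$: one must verify that (i) iterated blow-ups restricted to $\{[0:1:0],[1:0:0]\}$ suffice to resolve the indeterminacy of $\varphi$, and (ii) none of the resulting exceptional components passes through the lift of $x$. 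Both follow from the observation that the entire indeterminacy locus of $\varphi$ is disjoint from $x$ and that every subsequent blow-up occurs over a previously introduced exceptional divisor, never producing a component through $x$.
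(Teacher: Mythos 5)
Your proposal is correct, and the underlying strategy is the same as the paper's: realize a given slope $r=q/p$ by producing a fibration $f\colon S\to\P^1$ with a reducible fiber $F=pC_1+qC_2+E$ in which $C_1,C_2$ meet transversally at the flag point $x$ and $\Supp(E)$ avoids $x$, and then read off $\mu(F;C_1)=p$, $N'=qC_2+E$, $\ord_x(N'|_{C_1})=q$ from Theorem~\ref{compoklim}(2). Where you differ is in the construction of the fibration: the paper builds $S$ by successive blow-ups of $\P^1\times\P^1$ that track the Euclidean algorithm for $(p,q)$, raising the multiplicities of the two fiber components step by step; you instead start from $\P^2$ and resolve the single explicit monomial map $[u:v:w]\mapsto[u^pv^q:w^{p+q}]$, so that the target multiplicities $p,q$ are visible from the start and every blow-up center lies over $\{[0:1:0],[1:0:0]\}$, away from $x=[0:0:1]$. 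Both constructions give smooth projective toric surfaces, so they are comparably explicit; the paper's inductive route is somewhat more elementary (no resolution of a rational map is invoked), whereas yours writes the fibration in closed form and, as a side benefit, spells out the identification $F=\mu C_1+N_\mu=pC_1+qC_2+E$ via the Hodge-index argument of Lemma~\ref{nakrat} in more detail than the paper's appeal to Example~\ref{oklimex}(4) does. Your separate treatment of $r=0$ is fine but not strictly necessary, since Remark~\ref{9ptsex} (cited in Example~\ref{oklimex}(3)) already gives a horizontal limiting Okounkov body.
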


\begin{proof}
By Example \ref{oklimex}, we only have to deal with the case $r=\frac{p}{q}>0$ with relatively prime positive integers $p$ and $q$. It suffices to show the existence of a fibration $f: S \to C$ such that a fiber $F$ of $f$ can be written as
$$
F=pC_1 + qC_2 + E
$$
where $C_1$ and $C_2$ are integral curves transversally meeting at a point $x$ and $E$ is an effective divisor whose support does not contain neither $C_1$ nor $C_2$. For this purpose, we first consider $\P^1 \times \P^1$ with a fibration $\P^1 \times \P^1 \to \P^1$. For any integer $m>0$, by taking a successive blow-ups of $\P^1 \times \P^1$, we can make one fiber contain two irreducible components with multiplicities $1$ and $m$ transversally meeting at a point.
Suppose that we have one fiber containing two irreducible components with multiplicities $m$ and $n$ transversally meeting at a point. Then by taking a  successive blow-ups of that surface, we can obtain a fiber containing two irreducible components with multiplicities $m+kn$ and $n$ for any integer $k>0$ transversally meeting at a point.
By considering the Euclidean algorithm for $p$ and $q$, we can take a successive blow-ups of $\P^1 \times \P^1$ such that the resulting surface has a fiber containing two irreducible components with multiplicities $p$ and $q$ transversally meeting at a point.
\end{proof}

\begin{remark}
It is shown in \cite[Theorem B]{KLM} that any real polygon satisfying some conditions in $\R_{\geq 0}^2$ can be realized as the Okounkov body of a big divisor on a smooth projective toric surface.
The example given in the proof of Theorem \ref{exslope} is also a smooth projective toric surface.
\end{remark}

For the valuative Okounkov body $\okval_{C_\bullet}(D)$ of an effective divisor $D$, the analogous statement to Theorem \ref{compoklim} also holds. Since the proof is similar to that of Theorem \ref{compoklim} and Theorem \ref{exslope}, we omit it here. Recall that we always have $\dim \okval_{C_\bullet}(D)=\kappa(D)$ by Proposition \ref{dimok} (1).

\begin{theorem}\label{compokval}
Let $D$ be an effective divisor on a surface $S$, and $D=P_s+N_s$ be the $s$-decomposition.
Assume that $D$ is not big. Fix an admissible flag $C_\bullet : \{ x \} \subseteq C \subseteq S$. Then
$$
\okval_{C_\bullet}(D)=\okval_{C_\bullet}(P_s) + (\mult_C N_s, \ord_x ((N_s-(\mult_C N_s)C)|_C)),
$$
and $\okval_{C_\bullet}(P_s)$ is given as follows:
\begin{enumerate}[\indent$(1)$]
 \item Suppose that $P_s.C>0$. Then $\kappa(D)=1$, and we have
$$
\okval_{C_\bullet}(P_s) = \{ (x_1, x_2) \mid 0 \leq x_2 \leq \vol_{S|C}(D) \}.
$$

 \item Suppose that $P_s.C=0$. Let $\mu:=\mu(D;C)$. If $\mu=0$, then $\okval_{C_\bullet}(P_s)=\{ (0,0)  \}$ and $\kappa(D)=0$.
If $\mu>0$, then $\kappa(D)=1$ and we can write $P_s \sim \mu C + N'$ for some effective divisor $N'$. In this case, we have $\okval_{C_\bullet}(P_s)=\oklim_{C_\bullet}(P_s)$.
\end{enumerate}
In particular, if $D$ is a $\Q$-divisor, then the valuative Okounkov body $\okval_{C_\bullet}(D)$ is a line segment in $\R_{\geq 0}^2$ with a nonnegative rational slope with rational end points. Conversely, for any nonnegative rational number $r \in \Q_{\geq 0}$, there exists a smooth projective surface $S$, an effective $\Q$-divisor $D$ on $S$, and an admissible flag $C_\bullet$ such that the valuative Okounkov body $\okval_{C_\bullet}(D)$ has a slope $r$.
\end{theorem}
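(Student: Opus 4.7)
Proof plan. The plan is to mirror the proofs of Theorem~\ref{compoklim} and Theorem~\ref{exslope} with the $s$-decomposition replacing the Zariski decomposition and the valuative analogue of Lemma~\ref{dtoplem} (the remark immediately after that lemma) replacing Lemma~\ref{dtoplem} itself. That remark already supplies
$$\okval_{C_\bullet}(D)=\okval_{C_\bullet}(P_s)+(\mult_C N_s,\ord_x((N_s-(\mult_C N_s)C)|_C)),$$
so the task reduces to describing $\okval_{C_\bullet}(P_s)$. As a starting point I use the identity $H^0(S,\lfloor mP_s\rfloor)=H^0(S,\lfloor mD\rfloor)$ for sufficiently divisible $m$, which gives $\kappa(P_s)=\kappa(D)$ and hence, by Proposition~\ref{dimok}(1), $\dim\okval_{C_\bullet}(P_s)=\kappa(D)$. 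An inspection of the infimum defining $N_s$ further shows that $P_s=0$ iff $\kappa(D)=0$, so in every other situation the non-bigness of $D$ forces $\kappa(D)=1$.

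For case (1), $P_s.C>0$ gives $P_s\ne0$ and hence $\kappa(D)=1$. The containment $\okval_{C_\bullet}(P_s)\subseteq\oklim_{C_\bullet}(P_s)$ together with Theorem~\ref{compoklim} applied to $P_s$ (whose Zariski positive part inherits $P_s.C>0$ because the absence of fixed components of $P_s$ precludes $C$ from appearing in its Zariski negative part) pins $\okval_{C_\bullet}(P_s)$ inside the vertical segment on the $x_2$-axis. Its length matches $\vol_{S|C}(D)$ via Theorem~\ref{chpwthm}(1) once $C$ is recognised as a Nakayama subvariety of $P_s$: since $\kappa(P_s)=1$, the Iitaka fibration $\phi\colon S\dashrightarrow B$ has one-dimensional fibres, and $P_s.C>0$ forces $C$ to be a multisection of $\phi$, so no $L\in|mP_s|$ contains $C$, yielding $H^0(S,\lfloor mP_s\rfloor-C)=0$ for every $m\geq 0$.

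For case (2) with $P_s.C=0$ I split on $\mu:=\mu(D;C)$. When $\mu=0$, a Lemma~\ref{nakp}-style identity gives $\mu(P_s;C)=0$, and the Hodge-index step used in the proof of Theorem~\ref{compoklim}(2) (exploiting $P_s\cdot P_s=P_s.C=0$) forces $P_s=0$, whence $\okval_{C_\bullet}(P_s)=\{(0,0)\}$ and $\kappa(D)=0$. When $\mu>0$ the same Hodge-index analysis produces the expression $P_s\sim\mu C+N'$ with $N'$ effective; evaluating $\nu_{C_\bullet}$ on $\mu C+N'$ yields the endpoint $(\mu,\ord_x(N'|_C))$ of the segment $\oklim_{C_\bullet}(P_s)$ provided by Theorem~\ref{compoklim}(2), while a companion section with $\nu=(0,0)$ (furnished by the absence of fixed components of $P_s$ at $C$) supplies the origin. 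Convexity of $\okval_{C_\bullet}(P_s)$ combined with its inclusion in $\oklim_{C_\bullet}(P_s)$ then forces $\okval_{C_\bullet}(P_s)=\oklim_{C_\bullet}(P_s)$.

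Finally, the realisability of arbitrary $r\in\Q_{\geq 0}$ follows by re-running the fibration construction from the proof of Theorem~\ref{exslope}: successive blow-ups of $\mathbb{P}^1\times\mathbb{P}^1$ produce a fibre $F=pC_1+qC_2+E$ with the required transversality, and the flags $C_{i\bullet}$ land in the case~$(2)$ with $\mu>0$ regime analysed above, so $\okval_{C_{i\bullet}}(F)=\oklim_{C_{i\bullet}}(F)$ and the slopes $q/p$ and $p/q$ are realised. The main obstacle I expect is the Nakayama-subvariety verification in case~(1): the vanishing $H^0(S,\lfloor mP_s\rfloor-C)=0$ for every $m$ is genuinely stronger than the mere absence of fixed components of $P_s$ along $C$, so the Iitaka-fibration argument must be executed carefully enough to cover every admissible $C$ with $P_s.C>0$ rather than only those in general position.
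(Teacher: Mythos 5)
Your overall skeleton is right and matches the route the paper intends (mirror Theorem~\ref{compoklim} with the $s$-decomposition, reduce to $P_s$ via the Remark after Lemma~\ref{dtoplem}, split on $P_s.C$ and $\mu$, and re-use the construction from Theorem~\ref{exslope} for realizability). The facts $\kappa(P_s)=\kappa(D)$ and ``$P_s=0$ iff $\kappa(D)=0$'' are correct and useful.

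The missing ingredient is the observation that $P_s$ is automatically \emph{nef}. By minimality, $N_s(P_s)=0$; and on a surface the Zariski negative part satisfies $N_\sigma(P_s)\leq N_s(P_s)$ (the infimum over the numerical class can only be smaller than over the $\Q$-linear class), so $N_\sigma(P_s)=0$, i.e.\ $P_s$ is nef. This is what makes the ``similar proof'' run: Theorem~\ref{compoklim} then applies to $P_s$ with trivial Zariski decomposition, so your circumlocutions about the ``Zariski positive part of $P_s$'' and ``absence of fixed components precluding $C$ from its negative part'' dissolve, and $\okval_{C_\bullet}(P_s)$ is a priori a subsegment of $\oklim_{C_\bullet}(P_s)$ on the $x_2$-axis in case~(1). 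Moreover, with $P_s$ nef, not big, and $\kappa(P_s)=\nu(P_s)=1$, $P_s$ is semi-ample, giving a genuine fibration $\phi$ with every member of $|mP_s|$ (divisible $m$) pulled back from the base; this closes the gap you flag in case~(1), since $P_s.C>0$ forces $C$ horizontal so no such member contains $C$, and small $m$ is handled by raising sections to a power. You are right that without this your Iitaka-fibration argument as written is incomplete, but you leave the hole open rather than fill it. There is also an unacknowledged gap in case~(2) with $\mu=0$: the Hodge-index step from Theorem~\ref{compoklim}(2) uses $P.P_\mu=0$ with both classes in the closed positive cone; applying it to $P_s$ and $C$ does not force $P_s=0$ when $C^2<0$. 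Instead one should note that if $P_s\neq 0$ then $\kappa(P_s)=1$ and the semi-ample fibration produces an effective $L\sim_\Q P_s$ containing the vertical curve $C$, giving $\mu(P_s;C)>0$, a contradiction. With these two repairs your outline becomes a complete proof.
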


As the first application of our main results, we can completely understand the infinitesimal limiting Okounkov
body of a pseudoeffective divisor.

\begin{corollary}\label{infoklimdim}
Let $D$ be a non-big pseudoeffective divisor on a surface $S$.
Let $x\in S$ be a general point. Then we have
$$
\oklim_{\inf}(D) = \{(x_1, 0) \mid 0 \leq x_1 \leq \mu (D;x) \}.
$$
Hence, $\dim \oklim_{\inf}(D) = \max\{ 0, \kappa_{\max}(D) \}$ and $\vol_{\R^1} \oklim_{\inf}(D) = \mu (D;x)$.
\end{corollary}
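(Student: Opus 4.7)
I would begin by unwinding the definition: let $\pi \colon \widetilde{S} \to S$ be the blowup at a very general point $x \in S$ with exceptional divisor $E$, and let $x' \in E$ be very general, so that $\oklim_{\inf}(D) = \oklim_{C_\bullet}(\pi^* D)$ for the flag $C_\bullet \colon \{x'\} \subseteq E \subseteq \widetilde{S}$. Since $x \notin \Supp N = \bm(D)$ for very general $x$, the pullback $\pi^* D = \pi^* P + \pi^* N$ is still a Zariski decomposition on $\widetilde{S}$: $\pi^* P$ is nef, $\pi^* N$ is effective with the same intersection matrix as $N$, and $\pi^* P \cdot \pi^* N_i = P \cdot N_i = 0$. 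Moreover $\pi^* N$ is disjoint from $E$, so $\mult_E \pi^* N = 0$ and $\pi^* N|_E = 0$. By Lemma \ref{dtoplem}, the problem reduces to computing $\oklim_{C_\bullet}(\pi^* P)$.

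Next, I would apply Theorem \ref{compoklim}(2) to $\pi^* P$ on $\widetilde{S}$, which is justified because the projection formula gives $\pi^* P \cdot E = 0$. Writing $\mu := \mu(\pi^* D; E) = \mu(D; x)$ (by the definition of the Nakayama constant at a point), the case $\mu = 0$ is immediate. For $\mu > 0$, the theorem produces an effective divisor $N'$ with $\pi^* P \equiv \mu E + N'$ and describes $\oklim_{C_\bullet}(\pi^* P)$ as the segment from the origin to $(\mu, \ord_{x'}(N'|_E))$. The auxiliary fact I need is that this slope vanishes for very general $x'$. If $E$ were a component of $N'$, writing $N' = aE + N''$ with $a > 0$ and $N''$ effective would make $\pi^* P - (\mu + a) E \equiv N''$ pseudoeffective, contradicting the maximality of $\mu$. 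Hence $E \not\subseteq \Supp N'$, so $N'|_E$ is a finite effective divisor on $E \cong \P^1$, and a very general $x'$ avoids its support. This gives $\oklim_{\inf}(D) = \{(x_1, 0) \mid 0 \leq x_1 \leq \mu(D; x)\}$ and the volume formula.

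For the dimension formula, I would match $\dim \oklim_{\inf}(D) \in \{0, 1\}$ (which equals $1$ iff $\mu(D; x) > 0$) with $\max\{0, \kappa_{\max}(D)\}$ by establishing the equivalence $\mu(D; x) > 0 \iff \kappa_{\max}(D) \geq 1$. The direction $\Leftarrow$ follows from Proposition \ref{dimok}(2), since $\kappa_{\max}(D) \leq \dim \oklim_{\inf}(D)$. For $\Rightarrow$, I would push forward $\pi^* P \equiv \mu E + N'$ to obtain an effective $N'' := \pi_* N' \equiv P$ on $S$; comparing $\pi^* N'' = N' + (\mult_x N'') E$ with $\pi^* P \equiv \mu E + N'$ yields $\mult_x N'' = \mu > 0$, so $N''$ is an effective representative of $[P]$ passing through $x$. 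Varying $x$ over a Zariski open subset then produces a family of effective divisors in the class $[P]$ which sweeps $S$, and together with $\kappa_{\max}(D) = \kappa_{\max}(P)$ on surfaces (valid since the Zariski decomposition is numerical) one deduces $\kappa_{\max}(P) \geq 1$.

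The real obstacle lies in this last step: promoting a sweeping family of effective divisors in one numerical class to a positive-dimensional linear series inside some single line bundle. On surfaces of large irregularity (as in Example \ref{ex-1}, where $\kappanu > \kappa_{\max}$), such a family could a priori factor through a positive-dimensional translate in $\mathrm{Pic}^0(S)$ while each individual member still has $h^0 = 1$. Ruling out this pathology — equivalently, showing that the infinitesimal limiting Okounkov body detects $\kappa_{\max}$ rather than merely $\kappanu$ — is the technical heart of the corollary, and would require either a direct multiplicity bound on $\mu(D; x)$ in terms of $\kappa_{\max}$, or an appeal to the structure of nef divisors $P$ on surfaces with $P^2 = 0$ and $\kappa_{\max}(P) = 0$.
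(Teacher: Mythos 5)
Your plan tracks the paper's own proof line by line: reduce to $\pi^*P$ via Lemma \ref{dtoplem} (using that a general $x$ avoids $\Supp N$, so $E$ is not a component of $\pi^*N$), observe $\pi^*P\cdot E=0$, apply Theorem \ref{compoklim}(2), and use the generality of $x'\in E$ to force the slope $\ord_{x'}(N'|_E)/\mu$ to vanish --- this is exactly what the paper compresses into ``by the generality assumption.'' Up to and including the identity $\oklim_{\inf}(D)=\{(x_1,0)\mid 0\leq x_1\leq\mu(D;x)\}$ and the volume formula, your argument is correct and somewhat more explicit than the paper's.

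You have also correctly located the one genuinely delicate step, and it is worth saying plainly that the paper's proof does not spell it out either: the sentence ``If $\kappa_{\max}(D)=-\infty$ or $0$, then $\oklim_{\inf}(D)=\{(0,0)\}$'' is asserted without argument, and Theorem \ref{compoklim}(2) by itself yields only $\kappa_{\max}(D)\geq 0$, not $\geq 1$, when $\mu>0$. Your instinct that the fix must come from the structure of nef divisors $P$ with $P^2=0$ is the right one, and the irregularity worry can in fact be handled. Concretely: suppose $\mu(P;x)>0$ for very general $x$; pushing forward as you did gives an effective $G_x\equiv P$ through $x$. Nefness of $P$ forces $P\cdot C=0$ for every integral component $C$ of $G_x$, so by Hodge index (using $P^2=0$, $P\not\equiv 0$) either $C\equiv cP$ with $C^2=0$, or $C^2<0$. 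Curves of the latter type are rigid (a nonconstant family $\{C_t\}$ of numerically equivalent integral curves would give $C_0\cdot C_t=C_0^2<0$ for two distinct effective curves without common components), so there are only countably many of them and a very general $x$ lies on none; hence some integral component $C_x\ni x$ satisfies $C_x\equiv c_xP$, $C_x^2=0$. Any two distinct such curves are disjoint (Hodge index again), so this $1$-parameter covering family is the fiber family of a fibration of a birational model of $S$ onto a curve, whence $\kappa_{\max}(P)\geq\kappa(\text{fiber})=1$. This is also what an appeal to the nef reduction map of $P$ would give. Without an argument of this kind, the equality $\dim\oklim_{\inf}(D)=\max\{0,\kappa_{\max}(D)\}$ --- as opposed to something merely squeezed between $\kappa_{\max}$ and $\kappanu$ --- is not a formal consequence of the results cited in the paper, which is precisely the point you flagged.
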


\begin{proof}
Let $D = P+N$ be the Zariski decomposition, and $f : \widetilde{S} \to S$ be the blow-up at a general point $x\in S$ with the exceptional divisor $E$. Then $f^*D = f^*P + f^*N$ is the Zariski decomposition.
Note that $f^*P.E=0$.
By the generality assumption, we may assume that $x$ is not contained in the support of $N$. Thus $E$ is not a component of $f^*N$.
By Lemma \ref{dtoplem}, we have $\oklim_{\inf}(D)=\oklim_{\inf}(P)$.
Consequently, $\oklim_{\inf}(D)$ contains the origin of $\R^2$ by Theorem \ref{bm}.
If $\kappa_{max}(D)=-\infty$ or $0$, then $\oklim_{\inf}(D)=\{(0,0)\}$. If $\kappa_{max}(D)=1$, then by the generality assumption and Theorem \ref{compoklim}, the assertion follows.
\end{proof}

\subsection{Seshadri constant}

Finally, we compute the Seshadri constant via the limiting Okounkov body.
For a nef divisor $D$ and a subvariety $V$, we define the \emph{Seshadri constant of $D$ along $V$} as follows:
$$
\epsilon(D; V) := \sup \{ s \geq 0 \mid f^*D-sE \text{ is nef} \}
$$
where $f: \widetilde{S} \to S$ is the blow-up of $S$ at $V$ with the exceptional divisor $E$.
Note that if $V$ is an integral curve, then we take $f=id$, $\widetilde{S}=S$ and $E=V$.
 We can compute the Seshadri constant along an integral curve by using the limiting Okounkov bodies.

\begin{theorem}\label{sesh}
Let $D$ be a nef divisor and $C$ be a smooth curve on $S$. Then we have
$$
\epsilon (D;C)=\inf_{x \in C} \{ s \mid (s, 0) \not\in \oklim_{C_\bullet}(D) \text{ where } C_\bullet: \{x\} \subseteq C \subseteq S \text{ is an admissible flag} \}.
$$
\end{theorem}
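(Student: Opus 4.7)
The plan is to reduce both sides of the claimed identity to properties of the Zariski decomposition of $D - sC$. On the Seshadri side, because $C$ is an integral curve, the blow-up in the definition is the identity, so $\epsilon(D;C) = \sup\{s \geq 0 : D - sC \text{ is nef}\}$. On the Okounkov side, fix $x \in C$ and the flag $C_\bullet : \{x\} \subseteq C \subseteq S$. Since $D$ is nef, $\mult_C N = 0$ in its Zariski decomposition, so Theorem \ref{surfokbd} applies with $a = 0$ and describes $\oklim_{C_\bullet}(D)$ through the Zariski decompositions $D - sC = P_s + N_s$ for $s \in [0, \mu(D;C)]$; setting $\alpha(s) := \ord_x(N_s|_C)$, the body is cut out by $\alpha(x_1) \leq x_2 \leq \alpha(x_1) + C.P_{x_1}$. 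Both bounding functions are nonnegative, so $(s,0)$ lies in $\oklim_{C_\bullet}(D)$ if and only if $s \in [0, \mu(D;C)]$ and $\alpha(s) = 0$. I will denote by $t_x$ the infimum appearing in the statement, which equals the supremum of $s$ for which $(s, 0) \in \oklim_{C_\bullet}(D)$.

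The inequality $\inf_{x \in C} t_x \geq \epsilon(D;C)$ is easy: if $0 \leq s \leq \epsilon(D;C)$ then $D - sC$ is nef, so $N_s = 0$ and $\alpha(s) = 0$ independently of $x$, hence $(s, 0) \in \oklim_{C_\bullet}(D)$ for every flag based at a point of $C$. For the reverse, I would argue by contradiction: suppose $\inf_x t_x > \epsilon(D;C)$ and pick $s$ strictly between. Then $s < t_x$ for every $x \in C$, forcing $\alpha(s) = \ord_x(N_s|_C) = 0$ for \emph{every} point $x \in C$. By Lemma \ref{ptlem}(1), $C$ is not a component of $N_s$, so $N_s|_C$ is a well-defined effective divisor on $C$; its pointwise vanishing forces $\Supp(N_s) \cap C = \emptyset$, whence $C.N_i = 0$ for every component $N_i$ of $N_s$. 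Together with $P_s.N_i = 0$ from the Zariski decomposition, this gives
\[
N_s.N_i = (D-sC).N_i - P_s.N_i = D.N_i - sC.N_i = D.N_i \geq 0
\]
by nefness of $D$, and therefore $N_s^2 = \sum_i a_i(N_s.N_i) \geq 0$. But $N_s \neq 0$ would force $N_s^2 < 0$ by the negative definiteness of the intersection matrix of its components, a contradiction. Hence $N_s = 0$, so $D - sC = P_s$ is nef and $s \leq \epsilon(D;C)$, contradicting the choice of $s$.

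I expect the delicate step to be precisely this last one: upgrading the pointwise vanishing of $\ord_x(N_s|_C)$ along every $x \in C$ to the global vanishing $N_s = 0$. The argument relies on both the nefness of $D$ to control the sign of $D.N_i$ and the surface-specific Hodge-type negative definiteness of the intersection matrix of the negative-part components, so it is genuinely two-dimensional.
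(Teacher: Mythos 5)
Your proof is correct and follows essentially the same route as the paper: both reduce the statement to showing that $\ord_x(N_s|_C)=0$ for every $x\in C$ forces $N_s=0$, and both derive the contradiction from the nefness of $D$ together with the negative definiteness of the intersection matrix of the components of $N_s$ (and the orthogonality $P_s.N_i=0$). The only cosmetic difference is in how the contradiction is packaged: you compute $N_s.N_i\geq 0$ componentwise to get $N_s^2\geq 0$, while the paper exhibits an effective $E$ supported on $N_s$ with $D.E=N_s.E<0$; these are the same calculation.
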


\begin{proof}
For $0 \leq t \leq \mu(D;C)$, let $D-tC = P_t + N_t$ be the Zariski decomposition.
If $N_t=0$ for all $0 \leq t \leq \mu(D;C)$, then the assertion is trivial. Thus we now assume that $N_s \neq 0$ for some $0 < s \leq \mu(D;C)$.
Note that
$$
\epsilon:=\epsilon (D;C) = \inf \{s \mid N_s \neq 0 \}.
$$
We denote by
$$
\epsilon' := \inf_{x \in C} \{ s \mid (s, 0) \not\in \oklim_{C_\bullet}(D) \text{ where } C_\bullet: \{x\} \subseteq C \subseteq S \text{ is an admissible flag} \}.
$$
For $0 \leq s \leq \mu(D;C)$, it is enough to show that $(s, 0) \not\in \oklim_{C_\bullet}(D)$ for some admissible flag $C_\bullet: \{x\} \subseteq C \subseteq S$ and if and only if $N_s \neq 0$.
If $(s, 0) \not\in \oklim_{C_\bullet}(D)$, then by Theorem \ref{surfokbd}, $\ord_x (N_s|_C) >0$ so that $N_s \neq 0$.
For the converse, we suppose that $N_s \neq 0$. By Lemma \ref{ptlem} (1), $C$ is not a component of $N_s$. Suppose that $C.N_s = 0$, i.e., $C$ does not meet any irreducible component of $N$. Since the intersection matrix of irreducible components of $N_s$ is negative definite, there is an effective divisor $E$ such that $\text{Supp}(E) \subseteq \text{Supp}(N_s)$ and $N_s.E < 0$. Then we obtain
$$
D.E = (P_s+N_s+sC).E = N_s.E<0,
$$
which is a contradiction. Thus $C.N_s>0$, so $C$ meets $N_s$ at some point $x$. For the admissible flag $C_\bullet: \{x\} \subseteq C \subseteq S$, we get $(s, 0) \not\in \oklim_{C_\bullet}(D)$. Hence we are done.
\end{proof}

%\begin{corollary}
%Let $D$ be a nef divisor on $S$, and $x$ be a point in $S$. If $D$ is not big, then $\epsilon(D; x) = 0$.
%\end{corollary}

%\begin{proof}
%Let $f: \widetilde{S}  \to S$ be the blow-up of $S$ at $x$ with the exceptional divisor $E$, and $\mu:=\mu(D;x)=\mu(f^*D;E)$. We only have to consider the case $\mu>0$ since $\epsilon(D;x)\leq\mu(D;x)$. Thus we are in the second case of Theorem \ref{compoklim} and we have $f^*D \equiv \mu E + N'$ for some effective divisor $N'$. Since $f^*D.E=0$, it follows that $N'.E>0$. By Lemma \ref{ptlem} (1), $E$ is not a component of $N'$. Thus $E$ meets $N'$ at a point $y$. For any $0<s<\mu$, note that $f^*D - sE = (\mu-s)E + N' = \frac{\mu-s}{\mu}f^*D + \frac{s}{\mu}N'$ is the Zariski decomposition. Thus for the admissible flag $C_\bullet: \{y\} \subseteq E \subseteq \widetilde{S}$, we get $(s, 0) \not\in \oklim_{C_\bullet}(f^* D)$. By Theorem \ref{sesh}, $\epsilon(D; x) = 0$.
%\end{proof}

\section{Okounkov bodies on chambers}\label{sec5}

In this section, we study how the shape of the Okounkov body $\okbd_{C_\bullet}(D)$ changes as we vary $D$.
We first need to clarify what we mean by saying that $\oklim_{C_\bullet}(D)$ and $\oklim_{C_\bullet}(D')$ have the \emph{same} shape.

\begin{definition}\label{def-same okbd}
Let $\Delta, \Delta'\subseteq\R^2$ be convex rational polytopes.
We say $\Delta$ and $\Delta'$ are \emph{similar} and write $\Delta\approx \Delta'$ if $\Delta,\Delta'$ have the same number of vertices $\{v_1,\cdots,v_m=v_0\}$, $\{w_1,\cdots,w_m=w_0\}$, and edges $\{v_iv_{i+1}\}$, $\{w_iw_{i+1}\}$, respectively, that can be labeled in such a way that the rays $\overrightarrow{v_iv_{i+1}}$ and $\overrightarrow{w_iw_{i+1}}$ are parallel for all $i$.
\end{definition}

Two rays $\overrightarrow{V_1}, \overrightarrow{V_2}$ in $\R^2$ emitting from the points $O_1, O_2$ respectively are \emph{parallel} if the translated rays $\overrightarrow{V_1}-O_1$ and $\overrightarrow{V_2}-O_2$ coincide.
Note that a finite sequence of rays defines a $\approx$-equivalence class of polytopes in $\R^2$.

We define the \emph{Minkowski sum} of two subsets $\okbd,\okbd'\subseteq\R^2$ as
$$
\okbd+\okbd':=\{\mathbf x+\mathbf x'|\;\mathbf x\in\okbd,\;\mathbf x'\in\okbd'\}.
$$
We say that a convex bodies $\okbd$ is \emph{indecomposable}
if $\okbd=\okbd_1+\okbd_2$ for convex bodies $\okbd_1,\okbd_2$ implies $\okbd_1=a_1\okbd$
and $\okbd_2=a_2\okbd$ where $a_1,a_2\geq 0$ and $a_1+a_2=1$.
Note that the line segments and simplices are the only indecomposable convex rational polytopes in $\R^2$.

\begin{lemma}\label{lem-minko sum}
Let $\mc M=\{\okbd_1,\cdots,\okbd_m\}$ be a finite set of indecomposable convex rational polytopes of $\R^2$.
Then the Minkowski sums $\sum_{i=1}^m a_i\okbd_i$ for all $a_i>0$ are similar to each other.
\end{lemma}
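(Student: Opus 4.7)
The plan is to combine the classical description of edges of Minkowski sums in $\R^2$ with the fact that scaling preserves edge directions. Recall that for convex polytopes $P,Q\subseteq\R^2$, traversing $\partial(P+Q)$ counterclockwise yields a cyclic sequence of oriented edge directions obtained by merging the analogous sequences of $P$ and $Q$ in angular order; any pair of edges of $P$ and $Q$ sharing a common direction fuses into a single edge of $P+Q$ whose length is the sum of the two. Equivalently, the outward normal fan of $P+Q$ is the common refinement of the normal fans of $P$ and $Q$, and edge lengths are recovered from the support-function identity $h_{P+Q}=h_P+h_Q$.

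Using this, I would proceed as follows. For each $\okbd_i$, let $\mathcal D_i$ denote its cyclic sequence of oriented edge directions; since each $\okbd_i$ is either a segment or a triangle, $|\mathcal D_i|\in\{2,3\}$. First, observe that for any $a>0$ the scaled polytope $a\okbd_i$ has the same cyclic direction sequence $\mathcal D_i$: the edge lengths rescale by $a$, but no edge directions are created or destroyed. Iterating the two-summand Minkowski description above, the cyclic sequence of oriented edge directions of $\okbd(\mathbf a):=\sum_{i=1}^m a_i\okbd_i$ is the angular-sorted merge of $\mathcal D_1,\ldots,\mathcal D_m$ with coincident directions fused, and hence is independent of the choice of $a_i>0$.

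Consequently, for any two choices $\mathbf a,\mathbf a'\in\R_{>0}^m$, the polytopes $\okbd(\mathbf a)$ and $\okbd(\mathbf a')$ share the same number of vertices and edges, and under the induced cyclic labeling the corresponding edges point in the same oriented direction, hence are parallel in the sense of Definition \ref{def-same okbd}. Therefore $\okbd(\mathbf a)\approx\okbd(\mathbf a')$, which is the desired conclusion.

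The main technical point to verify carefully is the merge-by-angular-sort formula for edges of Minkowski sums in $\R^2$. This is standard and can be established either by a direct induction on $m$ starting from the two-summand case, or by invoking the common-refinement description of the normal fan of a sum together with the additivity of support functions. I anticipate no conceptual obstacle beyond this bookkeeping; the indecomposability hypothesis in $\mathcal M$ is not itself used in the argument but fixes the simple atoms (segments and triangles) from which the Minkowski chambers of the ambient theorem are built.
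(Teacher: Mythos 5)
Your proposal is correct and follows essentially the same route as the paper: both arguments rest on the standard fact that in $\R^2$ the edges of a Minkowski sum of convex polygons are, up to fusing coincident directions, the union of the edges of the summands (equivalently, the normal fan of the sum is the common refinement of the normal fans), together with the observation that scaling by $a_i>0$ preserves edge directions. The paper packages this as an induction on $m$ reducing to the two-summand case, while you describe the edge-direction sequence of the full sum directly; these are the same argument in different clothing, and your aside that indecomposability is not actually needed is consistent with the paper's proof as well.
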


\begin{proof}
We proceed the induction on $m$. The assertion is trivial if $m=1$. Assume that $m \geq 2$. By the induction hypothesis, $\sum_{i=1}^{m-1} a_i \okbd_i$ for all $a_i >0$ are similar to each other. Thus it is sufficient to show that if $\okbd$ and $\okbd'$ are similar rational convex polytopes and $\okbd''$ is an indecomposable convex rational polytope in $\R^2$, then $\okbd+\okbd''$ and $\okbd'+\okbd''$ are similar.
It is easy to check that the numbers of vertices of $\okbd+\okbd''$ and $\okbd'+\okbd''$ are the same. Furthermore, the Minkowski sum $\okbd+\okbd''$ (resp. $\okbd'+\okbd''$) is a convex polytope whose sides consist of the sides of $\okbd$ and $\okbd''$ (resp. $\okbd'$ and $\okbd''$). Thus $\okbd+\okbd''$ and $\okbd'+\okbd''$ are similar.
\end{proof}

We now consider the Minkowski decomposition of a divisor. For more details, we refer to \cite{PD}.
Let $S$ be a smooth projective surface such that $\ol\Eff(X)$ is rational polyhedral, and fix an admissible flag $C_\bullet : \{ x \} \subseteq C \subseteq S$ where $C$ is a general member of the linear system of a very ample divisor on $S$ and $x$ is a general point in $C$.
Let $D$ be a pseudoeffective divisor on $S$, and $D=P+N$ be the Zariski decomposition. By Lemma \ref{dtoplem}, we have $\oklim_{C_\bullet}(D)=\oklim_{C_\bullet}(P)$.
Thus it is enough to consider nef divisors.
By \cite[Main Theorem]{PD}, there exists a finite set $\mc M$ (which is called the \emph{Minkowski basis} with respect to $C_\bullet$) of nef $\Q$-divisors such that for any nef divisor $D$, we have
$$
D = \sum_{B_i \in \mc M} b_i B_i ~~\text{ and } ~~\oklim_{C_\bullet}(D)=\sum_{B_i \in \mc M}b_i \oklim_{C_\bullet}(B_i)
$$
where all $\oklim_{C_\bullet}(P)$ are indecomposable.
%We call $\mc M$ the Minkowski basis with respect to $C_\bullet$.
The presentation $D = \sum_{B_i \in \mc M} b_i B_i$ is called the \emph{Minkowski decomposition} of $D$ with respect to $\mc M$.

We recall the construction of the Minkowski basis $\mc M$ with respect to $C_\bullet$ (see \cite[Section 3.1]{PD}). First, the generators of extremal rays of $\Nef(S)$ belong to $\mc M$. Additionally, for each stability chamber $\SC$, we include in $\mc M$ the corresponding Minkowski basis element $B$ as follows. Let $N_1, \ldots, N_k$ be integral curves in the support of $\bp(D)$ for any $D \in \SC$. Then there is the unique nef divisor $B=C + \sum_{i=1}^k n_i N_i$ such that $n_i \geq 0$ and $B.N_i=0$ for all $1 \leq i \leq k$.
We also briefly explain how to obtain the Minkowski decomposition of a nef divisor $D$ (see \cite[Section 3.2]{PD}). If $D$ is not big, then we can write $D = \sum b_i B_i$ where $B_i$ are generators of extremal rays of the face of $\Nef(S)$ containing $D$. Since $C$ is ample, it follows from Theorem \ref{compoklim} that $\oklim_{C_\bullet}(D)$ is a vertical line segment of length $D.C$ in the $x_2$-axis and $\oklim_{C_\bullet}(B_i)$ are vertical line segments of length $B_i.C$ in the $x_2$-axis. Thus $\oklim_{C_\bullet}(D)=\sum b_i \oklim_{C_\bullet}(B_i)$, and hence, $D = \sum b_i B_i$ is the Minkowski decomposition. If $D$ is big, then we consider the stability chamber $\SC(D)$. Let $B_D$ be the corresponding Minkowski basis element to $\SC(D)$, and $b_{B_D}:=\sup \{ s \geq 0 \mid D-s B_D \text{ is nef} \}$. Then $D-b_{B_D} B_D$ is a nef divisor and lies in a face of the closure $\overline{\SC(D)}$, and $\oklim_{C_\bullet}(D)=b_{B_D}\oklim_{C_\bullet}(B_D) + \oklim_{C_\bullet}(D-b_{B_D} B_D)$. By continuing this process, we finally obtain the Minkowski decomposition of $D$.

Now we define the Minkowski chamber decomposition of the nef cone $\Nef(X)$ with respect to $C_\bullet$ following \cite{SS}. For a Minkowski basis element $B$ which is not in any of the extremal rays of $\Nef(S)$, we can decompose $\Nef(S)$ into the subcones $M_i$ generated by the extremal rays of $\Nef(S)$ and the ray spanned by $B$.
If $B'$ is another Minkowski basis element  which is not in any of the extremal rays of $\Nef(S)$, then we can decompose further into the subcones generated by the extremal rays of $M_i$ and the ray spanned by $B'$.
Repeat the process with all the Minkowski basis elements not in the extremal rays of $\Nef(S)$.
The interior of each subcone in the decomposition of $\Nef (S)$ we obtain at the end is called the \emph{Minkowski chamber} of $\Nef(S)$.
%Repeat the process for each subcone until no Minkowski basis elements apart from generators of extremal rays of $\Nef(S)$ lie in each cone. We call the interior of each subcone a %\emph{Minkowski chamber} of $\Nef(S)$.

\begin{example}
Let $f : S \to \P^2$ be the blow-up of two general points in $\P^2$ with exceptional divisors $E_1, E_2$, and $H:=f^* L$ where $L$ is a line in $\P^2$.
Note that the nef cone $\Nef(S)$ is generated by $H, H-E_1, H-E_2$.
If $C \in |3H-E_1-E_2|$ is a general member, then $\{ H, H-E_1, H-E_2, 2H-E_1-E_2, 3H-E_1, 3H-E_2, 3H-E_1-E_2\}$ is a Minkowski basis with respect to $C_\bullet$ and the Minkowski chamber decomposition is given in the picture on the right below. %as the left of the below picture.
If $C \in |H|$ is a general member, then $\{ H, H-E_1, H-E_2, 2H-E_1-E_2 \}$ is a Minkowski basis with respect to $C_\bullet$ and the Minkowski chamber decomposition is given as the picture on the right below.

\definecolor{uuuuuu}{rgb}{0.26666666666666666,0.26666666666666666,0.26666666666666666}
\begin{tikzpicture}[line cap=round,line join=round,>=triangle 45,x=0.09cm,y=0.09cm]
\clip(-8,-10.) rectangle (160.,40.);
\draw (0.,30.)-- (60.,30.);
\draw (30.,0.)-- (60.,30.);
\draw (30.,0.)-- (0.,30.);
\draw (30.,30.)-- (30.,0.);
\draw (0.,30.)-- (40.,10.);
\draw (20.,10.)-- (60.,30.);
\draw (-8.4752437487342815,37.5318080240546) node[anchor=north west] {$H-E_2$};
\draw (14.9624349234463,37.99424081066438) node[anchor=north west] {$2H-E_1-E_2$};
\draw (52.17858572392799,37.37766376185133) node[anchor=north west] {$H-E_1$};
\draw (0.602152965323393,12.872601480175115) node[anchor=north west] {$3H-E_2$};
\draw (42.997313601227916,12.872601480175115) node[anchor=north west] {$3H-E_1$};
\draw (27.124330002934364,-1.754526380321706) node[anchor=north west] {$H$};
\draw (18.420992301413683,24.04612278559039) node[anchor=north west] {$3H-E_1-E_2$};
\draw (90.,30.)-- (150.,30.);
\draw (90.,30.)-- (120.,0.);
\draw (150.,30.)-- (120.,0.);
\draw (120.,30.)-- (120.,0.);
\draw (80.00356275593792,37.58153917049374) node[anchor=north west] {$H-E_2$};
\draw (106.36610700133808,37.51910638388395) node[anchor=north west] {$2H-E_1-E_2$};
\draw (142.34910370419368,37.57325064608721) node[anchor=north west] {$H-E_1$};
\draw (116.83629060523266,-2.3876804779478) node[anchor=north west] {$H$};
\begin{scriptsize}
\draw [fill=black] (0.,30.) circle (2.5pt);
\draw [fill=black] (60.,30.) circle (2.5pt);
\draw [fill=black] (30.,0.) circle (2.5pt);
\draw [fill=black] (30.,30.) circle (2.5pt);
\draw [fill=black] (40.,10.) circle (2.5pt);
\draw [fill=black] (20.,10.) circle (2.5pt);
\draw [fill=uuuuuu] (30.,15.) circle (2.5pt);
\draw [fill=black] (90.,30.) circle (2.5pt);
\draw [fill=black] (120.,30.) circle (2.5pt);
\draw [fill=black] (150.,30.) circle (2.5pt);
\draw [fill=black] (120.,0.) circle (2.5pt);
\end{scriptsize}
\end{tikzpicture}
\end{example}

\begin{lemma}\label{minkowski}
Let $S$ be a smooth projective surface such that $\ol\Eff(X)$ is rational polyhedral, and fix an admissible flag $C_\bullet : \{ x \} \subseteq C \subseteq S$ where $C$ is a general member of the linear system of a very ample divisor on $S$ and $x$ is a general point in $C$. For a given Minkowski chamber $M$, let $B_1, \ldots, B_k$ be the Minkowski basis elements in the closure $\overline{M}$. Then for any $D \in M$, we have the Minkowski decomposition $D = \sum_{i=1}^k b_i B_i$ such that all $b_i >0$.
\end{lemma}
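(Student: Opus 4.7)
My plan is to follow the iterative algorithm for the Minkowski decomposition described in the text just before the statement, showing at each step that the basis element used is a ray generator of $\overline{M}$. I would proceed by induction on the number of iterations of the algorithm.

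Since $M$ is full-dimensional in $\Nef(S)$ and $D\in M$, the divisor $D$ is big. The algorithm then passes to the stability chamber $\SC(D)$ and its Minkowski basis element $B_{\SC(D)}$. The key claim at this step is that $B_{\SC(D)}$ is one of the ray generators $B_1,\ldots,B_k$ of $\overline{M}$. I would establish this from the construction of the Minkowski chamber decomposition: the subdivision of $\Nef(S)$ is obtained by successively adjoining rays through all Minkowski basis elements, so since both $D$ and $B_{\SC(D)}$ lie in the common stability chamber $\SC(D)$, the ray through $B_{\SC(D)}$ cannot have separated them during the subdivision process. Therefore it must be an extremal ray of $\overline{M}$.

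Next, letting $b_1:=b_{B_{\SC(D)}}>0$ and $D_1:=D-b_1 B_{\SC(D)}$, the remainder $D_1$ lies on a face of $\overline{\SC(D)}$ and consequently on a proper face $F_1$ of $\overline{M}$, whose ray generators form a subset of $\{B_1,\ldots,B_k\}$ with $B_{\SC(D)}$ removed. Applying the inductive hypothesis to $D_1$ in this lower-dimensional face --- or invoking the non-big branch of the algorithm when $D_1$ reaches a face of $\Nef(S)$, in which case the extremal rays spanning that face are themselves among the $B_i$ --- yields a decomposition $D_1=\sum_{i\neq \SC(D)} b_i B_i$ with every $b_i>0$. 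Combining this with the first step produces $D=\sum_{i=1}^{k} b_i B_i$ as desired.

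The main obstacle I anticipate is the rigorous verification of the key claim in the first step, namely that at every iteration the Minkowski basis element $B_{\SC(D_j)}$ is a ray generator of the current face of $\overline{M}$. This is essentially a combinatorial statement about the iterated subdivision: subdividing a cone by an interior ray partitions it into subcones each having that ray as a generator, and subsequent subdivisions by other rays never remove previously-added generators from the subcone still containing $D$. Making this precise requires some bookkeeping of the subdivision order, but the core geometric picture is clear, and it dovetails with the explicit description of the stability chambers used in the construction of the Minkowski basis.
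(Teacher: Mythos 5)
Your overall strategy — unpack the recursive algorithm and argue by induction that at each iteration the basis element $B_{\SC(D_j)}$ spans an extremal ray of the current face of $\overline{M}$ — is the right one, and it is essentially all the paper offers as well: the paper's proof is a single sentence, ``The assertion follows from the construction of the Minkowski chambers.'' So in spirit you and the paper are doing the same thing.

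However, the one piece of your argument that you flag as ``requiring some bookkeeping'' is in fact where the real content lies, and as you state it the claim is too strong. You assert that ``subsequent subdivisions by other rays never remove previously-added generators from the subcone still containing $D$.'' That is false for star subdivisions in general: if you first subdivide a simplicial cone $\langle r_1,r_2,r_3\rangle$ by an interior ray $B$, obtaining (among others) $\langle B,r_2,r_3\rangle$, and then subdivide that cone by an interior ray $B'$, you produce $\langle B',r_2,r_3\rangle$, which no longer has $B$ on its boundary. So for a point $D$ landing in $\langle B',r_2,r_3\rangle$ the previously-added ray through $B$ has indeed been dropped. Your argument therefore does not go through at the level of generality at which you phrase it. There is also a smaller imprecision: $B_{\SC(D)}$ does not lie \emph{in} the stability chamber $\SC(D)$; it lies on the boundary of $\overline{\SC(D)}$ (indeed it lies on $\partial\Nef(S)$ unless $\SC(D)=\Amp(S)$), so the phrase ``both $D$ and $B_{\SC(D)}$ lie in the common stability chamber'' needs rewording.

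What rescues the argument is the specific geometry of the Minkowski basis, not a general fact about star subdivisions. The flag curve $C$ is the basis element attached to the ample chamber and is the \emph{unique} Minkowski basis element lying in the interior of $\Nef(S)$; all the others satisfy $B_{\SC}\cdot N_i=0$ for some curve $N_i$, hence are non-ample and lie on $\partial\Nef(S)$. Performing the star subdivision by $C$ first places $C$ on the boundary of every resulting subcone, and since every later $B'$ lies on a face of $\Nef(S)$ not containing $C$, those further subdivisions never detach $C$. This is exactly what one sees in the paper's picture, where all six Minkowski chambers share the vertex $3H-E_1-E_2 = C$. Once this is in place, the first step of the algorithm (subtracting $b_C C$ from an ample $D$) always lands on a face of $\overline{M}$, and the remaining analysis is confined to the boundary of $\Nef(S)$, where one can argue face by face. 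So the correct version of your ``key claim'' is not the general star-subdivision bookkeeping statement but rather this structural observation about $C$ being the unique interior basis element together with the fact (from the construction in \cite{PD} and \cite{SS}) that the Minkowski chambers are compatible with the stability-chamber fan on $\partial\Nef(S)$. You would do well to isolate and cite those facts rather than appeal to a general combinatorial principle that does not hold.
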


\begin{proof}
The assertion follows from the construction of the Minkowski chambers.
\end{proof}

The following is the main result of this section.

\begin{theorem}\label{thrm-main3}
Let $S$ be a smooth projective surface such that $\ol\Eff(X)$ is rational polyhedral, and fix an admissible flag $C_\bullet : \{ x \} \subseteq C \subseteq S$ where $C$ is a general member of the linear system of a very ample divisor on $S$ and $x$ is a general point in $C$.
Then the limiting Okounkov bodies $\oklim_{C_\bullet}(D_i)$ for all $D_i$ in a given Minkowski chamber $M$ are all similar.
\end{theorem}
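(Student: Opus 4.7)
The plan is to combine the Minkowski decomposition machinery recalled just before Lemma \ref{minkowski} with the elementary convex-geometric statement in Lemma \ref{lem-minko sum}. Although the statement concerns divisors $D$ in $M \subseteq \Nef(S)$, it is worth noting at the outset that the generality assumptions on $C$ and $x$ imply, via Lemma \ref{dtoplem}, that passing from a pseudoeffective divisor to the nef positive part in its Zariski decomposition does not change $\oklim_{C_\bullet}$: indeed, $C$ avoids $\Supp(N)$ and $x$ avoids $N|_C$ for the Zariski decomposition $D = P + N$, so the shift term in Lemma \ref{dtoplem} vanishes. Thus the shape question intrinsically lives on the nef cone, which is exactly where the Minkowski chamber decomposition takes place.

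First I would fix the chamber $M$ and let $B_1, \ldots, B_k$ be the Minkowski basis elements lying in the closure $\overline{M}$. By Lemma \ref{minkowski}, every $D \in M$ admits a Minkowski decomposition
$$
D = \sum_{i=1}^k b_i B_i \qquad \text{with all } b_i > 0,
$$
and by the defining property of a Minkowski basis this translates into the equality of convex bodies
$$
\oklim_{C_\bullet}(D) = \sum_{i=1}^k b_i\, \oklim_{C_\bullet}(B_i),
$$
in which each summand $\oklim_{C_\bullet}(B_i)$ is an indecomposable convex rational polytope in $\R^2$.

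The crucial observation is that the finite collection of indecomposable polytopes $\{\oklim_{C_\bullet}(B_1), \ldots, \oklim_{C_\bullet}(B_k)\}$ depends only on the chamber $M$, not on the chosen divisor $D \in M$. Applying Lemma \ref{lem-minko sum} to this fixed collection, together with the positivity of the coefficients $b_i$ supplied by Lemma \ref{minkowski}, yields at once that the Minkowski sums $\sum_{i=1}^k b_i\, \oklim_{C_\bullet}(B_i)$ are all similar to one another as $D$ ranges over $M$, which is the desired conclusion. I do not anticipate any serious obstacle here: all the substantive work has been packaged into the Minkowski basis theory imported from \cite{PD} and into the elementary Lemma \ref{lem-minko sum}, and the proof reduces to the clean assembly of these two inputs together with the generality-based reduction to the nef case.
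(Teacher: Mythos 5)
Your argument is correct and matches the paper's (one-line) proof exactly: the result follows by combining Lemma \ref{minkowski}, which supplies a Minkowski decomposition $D=\sum b_i B_i$ with all $b_i>0$ over the fixed set of basis elements in $\overline{M}$, with Lemma \ref{lem-minko sum} applied to the corresponding indecomposable polytopes $\oklim_{C_\bullet}(B_i)$. Your preliminary remark about reducing to the nef cone via Lemma \ref{dtoplem} is accurate context but not strictly needed, since the Minkowski chambers are already subcones of $\Nef(S)$.
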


\begin{proof}
It follows from Lemmas \ref{lem-minko sum} and \ref{minkowski}.
\end{proof}

\end{document}